\newcommand{\TheTitle}{Generalization of Affine Feedback Stock  \\Trading Results to Include Stop-Loss Orders}
\newcommand{\TheAuthors}{Chung-Han Hsieh}
\headers{\TheTitle}{\TheAuthors}
\title{{\TheTitle}
	\thanks{
		The author would like to thank his dissertation advisor Professor B. Ross Barmish at the Boston University for his comments on the early drafts of this paper.
}}
\author{
	Chung-Han Hsieh\thanks{Department of Electrical and Computer Engineering, University of Wisconsin--Madison,
		(\mbox{\email{chunghan.hsieh@wisc.edu}}).}
}
\begin{document}
	
	\maketitle
	
	\begin{abstract}
		The takeoff point of this paper is to generalize the existing stock trading results for a class of affine feedback controller to include consideration of a stop-loss order. 
		Using the geometric Brownian motion as the underlying stock price model, our main result is to provide a closed-form expression for the cumulative distribution function for the trading profit or loss. 
		In addition, we show that the affine feedback controller with stop-loss order indeed generalizes the result without stop order in the sense of distribution function. 
		Some simulations and illustrative examples are also provided as supporting evidence of the theory.
		Moreover, we provide some technical results aimed at addressing the issues about \textit{survivability}, \textit{cash-financing} considerations, \textit{long-only} property, and lower bound of the expected gain or loss.
	\end{abstract}
	
	\begin{keywords}
		affine feedback, stochastic systems,  financial engineering, stopping time, geometric Brownian motion, stock trading
	\end{keywords}
	
	\begin{AMS}
		93E03, 91B02, 91B70  
	\end{AMS}
	
	\vspace{5mm}
	\section{Introduction}
	This paper is along an emerging line of researches which studies stock trading using control-theoretic approaches; e.g., see~\cite{Barmish_Primbs_2011,Barmish_Primbs_2016,hsieh2017drawdown,Hsieh_Barmish_2017_domination,calafiore2009affine,zhang2001stock,hsieh2019positive,hsieh2019impact}. 
	The takeoff point of this paper is to generalize the results for a class of affine feedback controller to include the consideration of stop-loss orders. 
	In particular, a \textit{stop-loss} order is a type of ``price-contingent" order described as follows: The stock is to be sold if the stock price falls below a prespecified price called \textit{stop price}. 
	When the stop price is reached, a stop order becomes a market order and get executed at the price. 
	For a general discussion on other typical price-contingent orders in stock trading, we refer the reader to \cite{bodie2009investments} and \cite{luenberger1997investment}.
	In the sequel, instead of calling the stop-loss order, we may sometimes call it as a stop order for simplicity.
	
	The main questions we would like to address in the paper is as follows: Given a geometric Brownian motion (GBM) as the underlying stock price model, what is the cumulative distribution function (CDF) for the trading profit or loss, call it $g(t)$, when the affine control with stop-loss order is considered? 
	Moreover, is it possible to obtain a closed-form characterization for the desired CDF for $g(t)$ so that a theoretician or practical trader might be benefited from it?   
	To the best of our knowledge, there is no literature discussing this issue and we view that this paper indeed plays the role for filling a void on this topic. 
	To be more specific, with a GBM price traded by an affine control with stop order, our main contribution is to provide a complete characterization on the CDF for $g(t)$ in closed form. 
	
	To close this brief introduction, we mention some related work regarding the stock trading which also includes stop order considerations; e.g.,  see \cite{muller1997stop} for a studies on risk measure in terms of the stochastic \textit{stop-loss order}, \cite{zhang2001stock} for a research on optimal selling rule via standard control-theoretic approach, and our prior work, \cite{hsieh2017drawdown,Hsieh_Barmish_2017_domination}, for a demonstration that a stop-loss trading behavior is also seen within our drawdown modulated feedback framework. 
	In addition,  some empirical trading tests via \textit{limit orders}, which is closely related to stop-loss order, are studied in  \cite{handa1996limit}. It indicates that the trading with limit orders indeed dominates the trading via market orders.  
	
	
	\subsection{Plan For the Remainder of The Paper} 
	In Section~\ref{SEC: Problem Formulation}, we state the problem formulation. Specifically, we use the geometric Brownian motion as the stock price model and our aim is to study the trading performance for an affine feedback control law with stop-loss order. 
	Subsequently, in Section~\ref{Sec: Preliminary Notations}, we introduce some preliminaries and shorthand notations  which are frequently used in the rest of the paper. 
	In Sections~\ref{SEC: CDF for buy and hold}--\ref{SEC: CDF for Cash-Financing}, we provide our main results, which characterize the CDF for the cumulative trading or loss functions for three cases; i.e., buy and hold ($K=1$), ``bold" investment ($K>1$), and ``timid" investment ($0<K<1$). 
	The corresponding remarks and illustrative examples are also provided.
	Then, in Section~\ref{SEC:Some Technical Results}, we also give some additional technical results regarding the issues of survivability, control input properties, long-only property, and  lower bound of expected trading gain-loss. 
	Finally, in Section~\ref{SEC:conclusions}, some concluding remarks and future work are discussed.
	
	\vspace{5mm}
	\section{Problem Formulation}\label{SEC: Problem Formulation}
	In the sequel, we assume the stock trading occurs within an ``idealized market." That is, we assume zero transaction costs, zero interest rates, perfect liquidity, and continuous trading. These assumptions arise in the finance literature in the context of ``firctionless" market; e.g., see~\cite{Barmish_Primbs_2016} and \cite{merton1992continuous} and provide a ``benchmark" setting when one wants to apply the theory developed in this paper in practice.
	
	To study the stock trading performance, it is convenient to establish a mathematical model which can be used to characterize the underlying stock price movement. 
	In mathematical finance, the geometric Brown motion (GBM) is often used as a standard price model, which is characterized by the two parameters: \textit{drift} and \textit{volatility}; see~\cite{Barmish_Primbs_2011,Barmish_Primbs_2016,karatzas1998methods, campolieti2016financial,luenberger1997investment}. 
	
	\subsection{GBM Price Model and Trader's Account Value}
	Consistent with the literature, for $t\geq 0$, we consider a GBM model for stock prices~$S(t)$ described by the stochastic differential equation
	$$
	\frac{dS}{S}=\mu dt + \sigma  dW; \ S(0) = S_0
	$$
	where $\mu$ is  drift and $\sigma > 0$ is   volatility and $W$ is a standard Wiener process resulting from $dW$; see~\cite{luenberger1997investment} for further detail.
	The drift parameter $\mu$ captures the annualized expected return, and the parameter $\sigma$ represents the annualized standard deviation corresponding to the underlying process.
	
	Having defined the stock price model, we now focus on the relationship between trader's account value and trading profit or loss. In particular, changes in the trader's account value correspond to changes in the profit or loss level. 
	Thus, for $t \geq 0,$ with initial account value $V(0):=V_0>0$, we have
	\[
	V(t) = V_0 + g(t)
	\]
	which implies $g(0)=0$.
	In the sequel, as mentioned previously, the quantity $g(t)$  denotes the \textit{cumulative profit or loss} over the time interval~$[0,t]$.

	\subsection{Affine Feedback Controller with Stop-Loss Order}
	When the price movement is against the trader, reducing the trading loss is crucial to preserve its capital. In practice, a typical level for a stop-loss order to be triggered is around $10\%$--$30\%$ for ``active" traders and~$8\%$ for ``conservative" traders; see \cite{o1988make}. 
	
	To include the stop-loss order consideration into our formulation, we proceeds as follows:  Given a constant $S_* >0$ with $S_* < S_0$ which we call the {\it stop price},  the trade will be terminated if 
	$$
	S(t^*) = S_*
	$$
	for some time $t^* \ge 0$. 
	This being the case, we obtain the \textit{stopped GBM}, call it~$\widetilde{S}(t)$, which is described as follows:
	\begin{align*}
	\widetilde{S}(t) := 
	\begin{cases}
	S(t), & t < t^*;\\
	S_*, & t \ge t^*.
	\end{cases}
	\end{align*}
	We note here that $\widetilde{S}(t) \geq S_*$ for all $t \geq 0.$
	Now, letting $u(t)$  be the control which representing the investment level along sample path with $\widetilde{S}(t)$, for $t \ge t^*$, the trade is stopped and we take $u(t) := 0$. For times $t$ such that the trade is \textit{not} stopped, we consider the control $u(t)$ determined by the form of affine feedback
	$$ \label{eq: control}
	u(t) := u_0 + K g(t)
	$$
	with $u_0:=u(0)>0$ and $K> 0$.\footnote{It is interesting to note that if $u_0:=KV(0)$, then, via a straightforward calculation, the affine feedback law reduces to the classical linear feedback; i.e., $u(t) = KV(t)$. In Section~\ref{SEC:Some Technical Results}, we see that the choice of $u_0$ may affect ``survivability" (no-bankruptcy). 
	} 
	This strategy is said to be \textit{going long} which represents the case where the trade is profitable with rising prices and is losing with falling prices; see also Lemma~\ref{lemma: long only property} in Section~\ref{SEC:Some Technical Results}.
	For $K=1$, the affine feedback controller above reduces to {\it buy and hold} trading strategy; see \cite{Barmish_Primbs_2016} for further detail.
	
	\subsection{Problem Statement}
	Given a GBM stock price, we consider a class of an affine feedback control described in equation~(\ref{eq: control}) and stop-loss order with stop price $S_*<S_0$. Our main goal is to seek a closed-form expression of CDF for cumulative trading profit or loss $g(t)$; i.e., for any $x \in \mathbb{R}$,
	\[
	F(x,t) :=P(g(t)\leq x).
	\]

	
	\vspace{5mm}
	\section{Preliminaries and Shorthand Notations}\label{Sec: Preliminary Notations} If the stop-loss order is not involved in the formulation, then the GBM price model leads to a stochastic differential equation which describes the incremental change in the profit or loss given by
	\[
	dg = \frac{dS}{S}u.
	\]
	The following lemma, which was initially proved in \cite{Barmish_Primbs_2016}, characterizes  the cumulative profit or loss function without stop order.
	
	\begin{lemma}[Barmish and Primbs \cite{Barmish_Primbs_2016}]\label{lemma: Barmish and Primbs g(t)}
		For $t\geq 0$, the affine feedback controller $u(t)=u_0+Kg(t)$ with $u_0>0$ and $K>0$ leads to the cumulative profit or loss
		\[
		g(t) = \frac{u_0}{K} \left( \left( \frac{S(t)}{S_0} \right)^K e^{{\frac{1}{2}\sigma^2 (K-K^2)t}} - 1\right).
		\]
	\end{lemma}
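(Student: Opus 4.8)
The plan is to reduce the nonlinear-looking feedback dynamics to a plain geometric Brownian motion in a transformed variable. Substituting the GBM dynamics $dS/S = \mu\,dt + \sigma\,dW$ into the profit-loss relation $dg = (dS/S)\,u$ together with the affine feedback $u = u_0 + Kg$ yields the linear stochastic differential equation $dg = (u_0 + Kg)(\mu\,dt + \sigma\,dW)$. The key observation is to differentiate the control itself: since $u = u_0 + Kg$ we have $du = K\,dg$, and the equation becomes $du = u\,(K\mu\,dt + K\sigma\,dW)$, i.e.\ $u$ is itself a geometric Brownian motion with drift $K\mu$ and volatility $K\sigma$.

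First I would solve this GBM for $u$ by applying It\^o's formula to $\log u$, which produces the It\^o correction term $-\tfrac12 K^2\sigma^2$ and gives the closed form $u(t) = u_0\exp\big((K\mu - \tfrac12 K^2\sigma^2)t + K\sigma W(t)\big)$, using the initial condition $u(0)=u_0$. Separately, the standard solution of the price GBM is $S(t) = S_0\exp\big((\mu - \tfrac12\sigma^2)t + \sigma W(t)\big)$, so that $(S(t)/S_0)^K = \exp\big((K\mu - \tfrac12 K\sigma^2)t + K\sigma W(t)\big)$.

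The main step is then to reconcile these two exponentials. Comparing the drift exponents, the difference between the $u$-process and $(S/S_0)^K$ is exactly $(K\mu - \tfrac12 K^2\sigma^2) - (K\mu - \tfrac12 K\sigma^2) = \tfrac12\sigma^2(K - K^2)$, which is precisely the compensating factor $e^{\frac12\sigma^2(K-K^2)t}$ appearing in the statement; the Wiener terms $K\sigma W(t)$ cancel identically. Hence $u(t) = u_0\,(S(t)/S_0)^K e^{\frac12\sigma^2(K-K^2)t}$. Finally I would invert the feedback relation via $g(t) = (u(t)-u_0)/K$ to obtain the claimed expression, with the initial condition $g(0)=0$ following from $S(0)=S_0$ and the vanishing of the exponent at $t=0$.

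The only real obstacle is the careful bookkeeping of the It\^o correction terms: the $-\tfrac12 K^2\sigma^2$ from the quadratic variation of $u$ and the $-\tfrac12 K\sigma^2$ arising from raising the price GBM to the power $K$ must be tracked separately, since it is exactly their mismatch that generates the nontrivial factor $e^{\frac12\sigma^2(K-K^2)t}$. An equivalent and perhaps cleaner route would be a verification argument: apply It\^o's formula directly to the candidate $g(t)$ and check that it satisfies $dg = (u_0+Kg)(\mu\,dt + \sigma\,dW)$ with $g(0)=0$, but this merely relocates the same correction-term computation.
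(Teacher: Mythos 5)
Your proof is correct: the substitution $u = u_0 + Kg$ turns the profit-loss dynamics into the GBM $du = u(K\mu\,dt + K\sigma\,dW)$, and the mismatch of It\^o corrections, $-\tfrac12 K^2\sigma^2$ versus $-\tfrac12 K\sigma^2$, exactly produces the factor $e^{\frac12\sigma^2(K-K^2)t}$, after which $g(t) = (u(t)-u_0)/K$ gives the claimed formula. Note that the paper itself omits the proof entirely, deferring to Theorem 6.2 of the cited Barmish--Primbs reference, so there is no in-paper argument to compare against; your derivation is the standard reduction-to-GBM argument used in that source, and it stands as a complete, self-contained proof of the lemma.
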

	
	\begin{proof} Omitted; see~\cite[Theorem 6.2]{Barmish_Primbs_2016}.
	\end{proof}
	
	\subsection{Remark} When there is no stop-loss order involved, one may readily verify that $g(t) \geq {-u_0}/{K}$ for all~$t\geq 0$ with probability one. 
	However, as seen later in this paper, when the trading involves stop order, the lower bounds for cumulative profit or loss $g(t)$ may be different. Indeed, we remind the reader that the expression for $g(t)$ above and its modifications, such as $g_*$ and $g_*(t)$ defined below, are frequently seen in the rest of the paper. 
	
	\subsection{Shorthand Notations}
	In the sequel, to simplify the exposition, we take
	$$
	Z_* :=  \left(\frac{S_*}{S_0} \right)^{\frac{{2\mu }}{{{\sigma ^2}}} - 1}; \; g_* := u_0 \left( \frac{S_*}{S_0} - 1 \right),
	$$
	and, for  $t>0$ we define 
	\begin{align}
	&g_*(t) := \frac{u_0}{K}\left( \left( \frac{S_*}{S_0} \right)^K {e^{\frac{1}{2}{\sigma ^2}(K - K^2)t}} - 1 \right)\label{eq:g_star_t}.
	\end{align}
	We note here that $g_*(t)$ is identical to the $g(t)$ if $S(t)$ is replaced by the stop price~$S_*$ and
	$g^*$ can be obtained from~$g_*(t)$ by simply setting~$K=1$.
	For $z,t>0$, we also define
	\begin{align}\label{eq: X_z_t}
	X(z,t) := \frac{ \log{z} + \left( \mu  - \frac{1}{2}\sigma ^2 \right)t}{\sigma \sqrt t }.
	\end{align}
	In addition, for $t>0$ and $x > -u_0/K$ with $K\neq 1$, we let
	\begin{align}
	&A \left( x \right) := \frac{-2}{{\sigma ^2}K(K - 1)}\log \left[ \left( {\frac{S_0}{S_*}} \right)^K\left( \frac{K}{u_0}x + 1 \right) \right];\label{eq: A_x}\\[.5ex]
	& B(x,t) := {{{\left[ {\left( {\frac{K}{{{u_0}}}x + 1} \right){e^{ - \frac{1}{2}{\sigma ^2}(K - {K^2})t}}} \right]}^{1/K}}}. \label{eq: B_x_t}
	\end{align}
	We should note that $A(x)$ above is well-defined within the range of $x$. To see this, one can simply verify that the argument of the logarithmic function in equation~(\ref{eq: A_x}) is strictly positive. That is,
	\[
	\left( {\frac{S_0}{S_*}} \right)^K\left( \frac{K}{u_0}x + 1 \right)>0.
	\]
	
	\subsection{Preliminary Results}
	In the sequel, we use $F_0(x,t)$ to denote the CDF of the trading profit or loss without stop-loss order. Let $\Phi(\cdot)$  be the cumulative distribution function (CDF) of the standard normal distribution satisfying
	$$
	\Phi(x):=\frac{1}{\sqrt{2\pi}}\int_{-\infty}^x e^{-t^2/2}dt,
	$$
	
	the following lemma summarizes the CDF of $g(t)$ when there is no stop-loss order involved. 

	\begin{lemma}[CDF for $g(t)$ without Stop Order]\label{lemma: CDF for g(t) without stop order}
		For $t>0$, consider a GBM stock price with drift $\mu$ and volatility~$\sigma$, an affine feedback controller with {gain $K>0$}.  Then the CDF for cumulative trading profit or loss, call it  $F_0(x,t) $, is as follows: 
		\begin{align} 
		F_0(x,t):=\begin{cases}
		0 & \text{ if } x \le -u_0/K;\\[.5ex]
		1  - \Phi \left( X\left( \frac{1}{B(x,t)},t \right) \right) & \text{ if } x > -u_0/K 
		\end{cases}
		\end{align}
		
	\end{lemma}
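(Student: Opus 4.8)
The plan is to start from the closed-form expression for $g(t)$ supplied by Lemma~\ref{lemma: Barmish and Primbs g(t)} and to push the event $\{g(t) \le x\}$ through to an event about the terminal price $S(t)$, whose law under the GBM is known explicitly. First I would dispose of the degenerate branch. Since $S(t) > 0$ with probability one, the factor $(S(t)/S_0)^K e^{\frac{1}{2}\sigma^2(K-K^2)t}$ is strictly positive, so the formula $g(t) = \frac{u_0}{K}\big((S(t)/S_0)^K e^{\frac{1}{2}\sigma^2(K-K^2)t} - 1\big)$ together with $u_0/K > 0$ yields $g(t) > -u_0/K$ almost surely. Hence $P(g(t) \le x) = 0$ for every $x \le -u_0/K$, which gives the first case.

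For the main branch, fix $x > -u_0/K$, so that $\frac{K}{u_0}x + 1 > 0$. I would rewrite the defining inequality $g(t) \le x$ as a chain of equivalences: because $u_0/K > 0$ it is equivalent to $(S(t)/S_0)^K e^{\frac{1}{2}\sigma^2(K-K^2)t} \le \frac{K}{u_0}x + 1$, and then moving the (positive) exponential to the right and taking the strictly increasing $K$-th root of two positive quantities reduces the event exactly to $\{S(t)/S_0 \le B(x,t)\}$, with $B(x,t)$ as in equation~(\ref{eq: B_x_t}). The point to check here is that $\frac{K}{u_0}x + 1 > 0$ guarantees $B(x,t)$ is a well-defined positive number, so that the root map is a genuine equivalence rather than a mere implication.

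It then remains to evaluate $P\big(S(t)/S_0 \le B(x,t)\big)$ using the log-normality of the GBM. Writing $\log(S(t)/S_0) = (\mu - \tfrac{1}{2}\sigma^2)t + \sigma W(t)$, this quantity is normal with mean $(\mu - \tfrac{1}{2}\sigma^2)t$ and variance $\sigma^2 t$. Standardizing and comparing with $\log B(x,t)$ turns the probability into $\Phi\big((\log B(x,t) - (\mu - \tfrac{1}{2}\sigma^2)t)/(\sigma\sqrt{t})\big)$. The final step is to recognize this as the claimed expression: from the definition of $X$ in equation~(\ref{eq: X_z_t}) one has $X(1/B(x,t),t) = (-\log B(x,t) + (\mu - \tfrac{1}{2}\sigma^2)t)/(\sigma\sqrt{t})$, which is exactly the negative of the argument just obtained, so the reflection identity $\Phi(-a) = 1 - \Phi(a)$ delivers $F_0(x,t) = 1 - \Phi(X(1/B(x,t),t))$.

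I expect the only real obstacle to be bookkeeping rather than substance: tracking the sign flips, handling the exponential and the $K$-th root as equivalences, and correctly matching the standardized normal argument to the $1/B(x,t)$ that appears inside $X$ via the reflection identity. There is no probabilistic difficulty beyond the routine log-normal computation, so the result follows once the algebraic inversion is carried out with care.
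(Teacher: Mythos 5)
Your proposal is correct and follows essentially the same route as the paper's own proof: invert the Barmish--Primbs closed form for $g(t)$ into the event $\left\{ S(t)/S_0 \le B(x,t) \right\}$, invoke log-normality of $\log(S(t)/S_0)$, and finish via the reflection identity $\Phi(-a) = 1-\Phi(a)$ together with the definition of $X(z,t)$. As a minor aside, your intermediate event is stated more carefully than the paper's, which writes $\log(S(t)/S_0) \le B(x,t)$ where it should read $\log(S(t)/S_0) \le \log B(x,t)$.
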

	
	\begin{proof}
		Recalling the gain-loss function $g(t)$ from Lemma~\ref{lemma: Barmish and Primbs g(t)},
		we write
		\begin{align*}
		F_0(x,t) 
		&= P(g(t) \le x)\\[.5ex]
		&= P\left( \frac{u_0}{K}\left[ \left( {\frac{S( t )}{S_0}} \right)^Ke^{\frac{1}{2}{\sigma ^2}\left( K - K^2 \right)t} - 1 \right] \le x \right).
		\end{align*} 
		Note that $g(t) \geq -u_0/K$ for all $t$. Thus, for $x \le -u_0/K$, the CDF
		$
		F_0(x,t)  = 0. 
		$
		On the other hand, for $x > -u_0/K$, the CDF becomes
		\begin{align*}
		F_0(x,t) 
		&= P\left( \log  \frac{S(t)}{S_0} \le \frac{1}{K}\log \left[ \left( \frac{K}{u_0}x + 1 \right){e^{ - \frac{1}{2}{\sigma ^2}\left( {K - {K^2}} \right)t}} \right] \right)\\[.5ex]
		&= P\left( \log  \frac{S(t)}{S_0} \le B(x,t) \right)
		\end{align*}
		where the last equality holds by applying the definition of $B(x,t)$, which is defined in equation~(\ref{eq: B_x_t}).
		Now, since ${S(t)}/{S_0}$ is log-normal distributed; i.e.,
		\[
		\log \frac{S(t)}{S_0} \sim \mathcal{N}\left ( \left ( \mu - \frac{1}{2} \sigma^2\right)t, \sigma^2t \right )
		\]
		where $\mathcal{N}\left ( \left ( \mu - \frac{1}{2} \sigma^2\right)t, \sigma^2t \right )$ means the normal distribution with mean $(\mu -  \sigma^2/2  )t$ and variance $\sigma^2 t$. 
		Thus, the CDF can be written explicitly as follows:
		\begin{align*}
		F_0(x,t) 
		&= \Phi \left( \frac{ \log B(x,t) - \left( \mu  - \frac{1}{2}{\sigma ^2} \right)t}{\sigma \sqrt t } \right)\\[.5ex]
		&= 1-\Phi \left( X\left(\frac{1}{B(x,t)},t\right) \right)
		\end{align*}
		where the last equality holds by applying the facts that $B(x,t)>0$ for $x>-u_0/K$ and $t>0$ and $\Phi(-z)=1-\Phi(z)$ for all $z \in \mathbb{R}$, and the definition of $X(z,t)$, which is defined in equation~(\ref{eq: X_z_t}). Hence,
		the proof is complete.
	\end{proof}
	
	Let 
	$
	t^* := \inf \{t \geq 0: S(t) \le S_* \},
	$ 
	which is a stopping time.
	The next simple lemma generalizes Lemma~\ref{lemma: Barmish and Primbs g(t)} to the case where the affine control with stop-loss order is considered. 
	
	\begin{lemma}\label{lemma: g_t for stopped GBM}
		For $t \geq 0,$ consider an affine feedback controller with gain $K>0$ and a stop price $S_*<S_0.$ Then we have
		\[
		g(t) = \frac{u_0}{K}\left( \left( \frac{\widetilde{S}(t)}{S_0} \right)^K {e^{\frac{1}{2}{\sigma ^2}(K - K^2)t}} - 1 \right).
		\]
	\end{lemma}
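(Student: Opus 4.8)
The plan is to reduce everything to Lemma~\ref{lemma: Barmish and Primbs g(t)} by conditioning on the stopping time $t^*=\inf\{t\geq 0:S(t)\leq S_*\}$ and treating the live regime $t<t^*$ and the stopped regime $t\geq t^*$ separately, exploiting that the stopped price $\widetilde{S}(t)$ equals $S(t)$ before the stop and equals $S_*$ after it. On $\{t<t^*\}$ no stop has been triggered, so the control is exactly the affine feedback $u(t)=u_0+Kg(t)$ and the increments $dg=(dS/S)u$ coincide with those underlying Lemma~\ref{lemma: Barmish and Primbs g(t)}; since $\widetilde{S}(t)=S(t)$ there, that lemma applies verbatim and already gives
\[
g(t)=\frac{u_0}{K}\left(\left(\frac{\widetilde{S}(t)}{S_0}\right)^K e^{\frac{1}{2}\sigma^2(K-K^2)t}-1\right)
\]
on the live regime.

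The substantive work is the stopped regime $t\geq t^*$. Once the barrier is hit the position is liquidated, so $u(t)=0$ and $dg=(dS/S)u=0$; hence the realized gain-loss is frozen, $g(t)=g(t^*)$ for every $t\geq t^*$. To evaluate $g(t^*)$ I would use that a GBM has continuous sample paths and starts strictly above the barrier, so at $t^*$ the price meets the stop level exactly, $S(t^*)=S_*$ almost surely. Letting $t\uparrow t^*$ in the live-regime expression (equivalently, applying Lemma~\ref{lemma: Barmish and Primbs g(t)} at the instant $t^*$) then yields $g(t^*)=\frac{u_0}{K}((S_*/S_0)^K e^{\frac{1}{2}\sigma^2(K-K^2)t^*}-1)=g_*(t^*)$, i.e.\ exactly the asserted expression with $\widetilde{S}(t^*)=S_*$ evaluated at $t=t^*$.

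The point requiring care---and the main obstacle---is reconciling this frozen value with the stated formula for $t>t^*$: with $\widetilde{S}(t)=S_*$ the asserted expression still carries the running time $t$ inside the It\^o-correction exponential, whereas no randomness accrues after the stop. For buy and hold ($K=1$) the exponent $K-K^2$ vanishes, so the expression is constant in $t$ and equals $g(t^*)$ automatically. For $K\neq 1$ the frozen value is $g_*(t^*)$ and the two expressions agree only at $t=t^*$; I would therefore read the substitution $S\mapsto\widetilde{S}$ in the statement as the device that carries the stop-time value $g_*(t^*)$ into the subsequent CDF analysis through the law of $t^*$, rather than as a pathwise identity for each $t>t^*$. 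I expect this post-stop bookkeeping to be the only delicate step, the live regime being an immediate corollary of Lemma~\ref{lemma: Barmish and Primbs g(t)}.
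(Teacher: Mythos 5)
Your proposal is correct, and in the stopped regime it is actually more careful than the paper's own argument; the two take genuinely different routes. For $t<t^*$ the paper does not invoke Lemma~\ref{lemma: Barmish and Primbs g(t)} directly as you do: it writes $g(t^*)=g(t)+g_r(t,t^*)$, expresses the remaining gain $g_r(t,t^*)$ as that of an affine controller restarted at time $t$ with intercept $u_0+Kg(t)$ over the window $[t,t^*]$, and solves the resulting implicit equation algebraically for $g(t)$ --- a roundabout derivation that your direct application (valid because on $\{t<t^*\}$ the stopped and unstopped systems coincide on $[0,t]$) replaces at no cost. For $t\geq t^*$ the paper simply substitutes $\widetilde{S}(t)=S_*$ into Lemma~\ref{lemma: Barmish and Primbs g(t)} while keeping the running time $t$ in the It\^o-correction exponent; this is exactly the step you flag, and your concern is a real discrepancy, not a misreading: since $u(t)=0$ after the stop, $dg=0$ and the gain is frozen at $g(t^*)=\frac{u_0}{K}\bigl(\bigl(S_*/S_0\bigr)^K e^{\frac{1}{2}\sigma^2(K-K^2)t^*}-1\bigr)$, which for $K\neq 1$ disagrees with the lemma's displayed formula at every $t>t^*$ (they agree only at $t=t^*$, or identically when $K=1$). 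Notably, the paper itself adopts your version downstream: inside its own proof of this lemma (the $t<t^*$ case uses $g(t^*)$ with $t^*$ in the exponent) and again in Lemmas~\ref{lemma: Prob of g and stopped for K bigger than 1} and~\ref{lemma: prob in cash-financing 1 }, where the stopped gain is written $g(t)=g(t^*)$ with $t^*$, not $t$. So your frozen-gain derivation, together with your reading of the $\widetilde{S}$-substitution as bookkeeping that feeds the law of $t^*$ into the CDF analysis rather than as a pathwise identity, is the interpretation under which the lemma and all subsequent results are mutually consistent; the gap lies in the paper's literal statement and its treatment of the $t\geq t^*$ case, not in your proof.
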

	
	\begin{proof}
		For $t \geq t^*$, it follows that $S(t) = S_*.$ Thus, Lemma~\ref{lemma: Barmish and Primbs g(t)} tells us that $$g(t) = \frac{u_0}{K}\left( \left( \frac{S_*}{S_0} \right)^K {e^{\frac{1}{2}{\sigma ^2}(K - K^2)t}} - 1 \right).$$
		On the other hand, for $t < t^*,$ 
		the  trading gain $g(t)$ can be found in terms of $g(t^*)$ and the remaining gain/loss from time $t$ to~$t^*$, call it  $g_r(t, t^*)$;~ namely,
		\[
		g({t^*}) = g(t) + g_r(t,t^*).
		\]
		Thus, we can write
		\begin{align*}
		g(t) &= g({t^*}) -  g_r(t,t^*)\\[.5ex]
		&  = \frac{{{u_0}}}{K}\left[ {{{\left( {\frac{{{S_*}}}{{{S_0}}}} \right)}^K}{e^{\frac{1}{2}	{\sigma ^2}(K - {K^2}){t^*}}} - 1} \right] \\[.5ex]
		& \ \ \ \ \ \  - \frac{{{u_0 + K g(t)}}}{K}\left[ {{{\left( {\frac{{S_*}}{{{S(t)}}}} \right)}^K}{e^{\frac{1}{2}{\sigma ^2}(K - {K^2})\left( {{t^*} - t} \right)}} - 1} \right]\\[.5ex]
		& = \frac{{{u_0}}}{K}\left[ {{{\left( {\frac{{S(t)}}{{{S_0}}}} \right)}^K}{e^{\frac{1}{2}{\sigma ^2}(K - {K^2})t}}\; - 1} \right].
		\end{align*} 
		Combining the two cases, we obtain 
		\[
		g(t) = \begin{cases}
		\frac{u_0}{K}\left( \left( \frac{S(t)}{S_0} \right)^K {e^{\frac{1}{2}{\sigma ^2}(K - K^2)t}} - 1 \right), & \text{ if } t < t^*;\\
		\frac{u_0}{K}\left( \left( \frac{S_*}{S_0} \right)^K {e^{\frac{1}{2}{\sigma ^2}(K - K^2)t}} - 1 \right), & \text{ if } t \geq t^*.
		\end{cases}
		\]
		which is equivalent to the  formula stated in the lemma.
	\end{proof}

	In the next sections to follow, we provide our main result by extending the result obtained in Lemma~\ref{lemma: CDF for g(t) without stop order} to include stop-loss order. 
	To be more specific, we provide closed-form characterizations of the CDF for the trading profit or loss $g(t)$ in three different cases: The buy and hold case, which corresponds to $K=1$, is analyzed in Section~\ref{SEC: CDF for buy and hold}. 
	Subsequently, the ``bold investment" case, which corresponds to $K > 1$, is addressed in Section~\ref{SEC: CDF for Leverage Case}. 
	Then Section~\ref{SEC: CDF for Cash-Financing} addresses the case for $0<K<1$, which corresponds to the ``timid investment."\footnote{In finance, the timid investment is closely related to the so-called \textit{cash-financing} condition. That is, the trader's investment can not exceed the account value; i.e., $|u(t)| \leq V(t)$ for all $t\geq 0$; see also Remark~\ref{remark: cash-financing}.}
	
	\vspace{5mm}
	\section{Buy and Hold Case (\texorpdfstring{$K=1$}{K=1})}\label{SEC: CDF for buy and hold}
	As mentioned in Section~\ref{Sec: Preliminary Notations}, the CDF for the cumulative trading profit or loss function is characterized for the $K =1$ case. 
	In this setting, the corresponding feedback control law becomes $u(t)=u_0 + g(t)$.
	
	\begin{theorem}\label{theorem1}
		For $t > 0$, 
		consider an affine feedback controller with gain~$K=1$ and a stop price \mbox{$S_*< S_0$}.
		Then, the CDF for the cumulative trading profit or loss $F(x,t) = P(g(t) \le x)$ is described as follows: 
		For $x <  g_*,$ $F(x,t ) \equiv 0$, and, for~$x \ge g_*$,
		\begin{align*}
		F(x,t) & =1 - \Phi \left( {{X}\left( {  {\frac{u_0}{x + u_0}} ,t} \right)} \right)  + Z_*\Phi \left( {{X}\left( {  {{{\left( {\frac{{{S_*}}}{{{S_0}}}} \right)}^2}\frac{{{u_0}}}{{x + {u_0}}}} ,t} \right)} \right).
		\end{align*}
	\end{theorem}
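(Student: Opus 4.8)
The plan is to reduce the statement to a first-passage computation for the underlying log-price and then invoke the reflection principle. First I would specialize Lemma~\ref{lemma: g_t for stopped GBM} to $K=1$: the factor $e^{\frac{1}{2}\sigma^2(K-K^2)t}$ equals $1$ there, so the gain-loss collapses to $g(t)=u_0\left(\widetilde{S}(t)/S_0-1\right)$. Hence
\[
F(x,t)=P(g(t)\le x)=P\!\left(\widetilde{S}(t)\le S_0\,\tfrac{x+u_0}{u_0}\right),
\]
and since $\widetilde{S}(t)\ge S_*$ identically, the threshold $S_0(x+u_0)/u_0$ drops below $S_*$ exactly when $x<g_*$. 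This instantly yields $F(x,t)\equiv 0$ for $x<g_*$ and isolates the nontrivial regime $x\ge g_*$, where the threshold satisfies $x+u_0>0$ so that logarithms are well defined.

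For $x\ge g_*$ I would pass to the log-price $X(t):=\log(S(t)/S_0)$, a Brownian motion with drift $\mu-\tfrac12\sigma^2$ and volatility $\sigma$ started at $0$, and set $b:=\log(S_*/S_0)<0$ together with $y_0:=\log((x+u_0)/u_0)$. By continuity of paths the stopping event $\{t^*\le t\}$ coincides with $\{M_t\le b\}$, where $M_t:=\min_{0\le s\le t}X(s)$. Conditioning on whether the trade has been stopped gives $F(x,t)=P(M_t\le b)+P(M_t>b,\ X(t)\le y_0)$, which I would rearrange into the cleaner form
\[
F(x,t)=P(X(t)\le y_0)+P(M_t\le b,\ X(t)>y_0).
\]
The first term is the log-normal CDF of $S(t)$; after applying $\Phi(-z)=1-\Phi(z)$ and the definition of $X(\cdot,t)$ it reproduces the leading term $1-\Phi\!\left(X(\tfrac{u_0}{x+u_0},t)\right)$.

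The crux, and the step I expect to be the main obstacle, is the joint term $P(M_t\le b,\ X(t)>y_0)$: the probability that a \emph{drifted} Brownian motion dips below $b$ yet terminates above $y_0$. The naive reflection principle handles only the driftless case, so the plan is to strip the drift by a Girsanov change of measure (equivalently, to use the method-of-images density for the absorbed process). This introduces the exponential tilt $e^{2(\mu-\sigma^2/2)b/\sigma^2}=(S_*/S_0)^{2\mu/\sigma^2-1}=Z_*$; reflecting across the level $b$ sends the terminal constraint $y_0$ to its mirror image $2b-y_0$, and collecting the resulting Gaussian integral recognizes $2b-y_0=\log\!\left((S_*/S_0)^2\,u_0/(x+u_0)\right)$, so the term becomes $Z_*\,\Phi\!\left(X\!\left((S_*/S_0)^2\tfrac{u_0}{x+u_0},t\right)\right)$. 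Adding the two contributions gives the asserted CDF; the only delicate bookkeeping is tracking the signs and the reflected drift so that the tilting exponent $2\alpha b/\sigma^2$ and the shifted mean $2b-y_0+\alpha t$ (with $\alpha=\mu-\tfrac12\sigma^2$) come out exactly as in the stated formula.
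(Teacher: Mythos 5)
Your proposal is correct, and its skeleton matches the paper's: specialize Lemma~\ref{lemma: g_t for stopped GBM} to $K=1$ so that $g(t)=u_0\bigl(\widetilde{S}(t)/S_0-1\bigr)$, translate $\{g(t)\le x\}$ into a threshold event for the stopped price, and note that the threshold $S_0(x+u_0)/u_0$ lies below $S_*$ exactly when $x<g_*$, which gives the zero region. Where you genuinely depart from the paper is in how the law of the stopped price is obtained. The paper simply quotes the closed-form CDF of an arithmetic Brownian motion absorbed at a lower barrier from the Albanese--Campolieti reference and changes variables to GBM via $\mathcal{X}(t)=\log S(t)$, $\alpha=\mu-\tfrac12\sigma^2$; you instead \emph{derive} that formula: split on whether the stop has been hit, rearrange by inclusion--exclusion into $P(X(t)\le y_0)+P(M_t\le b,\,X(t)>y_0)$, and evaluate the joint term by the drift-adjusted reflection principle (method of images / Girsanov). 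Your bookkeeping is right: the tilt exponent is $e^{2\alpha b/\sigma^2}=(S_*/S_0)^{2\mu/\sigma^2-1}=Z_*$, and the mirror point satisfies $2b-y_0=\log\bigl((S_*/S_0)^2\,u_0/(x+u_0)\bigr)$, so the two contributions reproduce $1-\Phi\bigl(X(\tfrac{u_0}{x+u_0},t)\bigr)$ and $Z_*\,\Phi\bigl(X((S_*/S_0)^2\tfrac{u_0}{x+u_0},t)\bigr)$ exactly, with the joint-law identity valid because $y_0\ge b$ on the regime $x\ge g_*$. What your route buys is self-containedness: the single nontrivial probabilistic input is proved rather than imported, and your stopped/unstopped decomposition is in fact the same device the paper deploys explicitly in its $K>1$ and $0<K<1$ cases (Lemmas~\ref{lemma: Prob of g and stopped for K bigger than 1} and \ref{lemma: Prob of g and not yet stopped for K bigger than 1}), where the joint probability $P(S(t)\ge x,\,t^*\ge t)$ is again cited rather than derived. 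What the paper's route buys is brevity. The one thing to keep honest in a written-up version is the implicit step that on $\{t^*\le t\}$ one has $g(t)=g_*\le x$ (for $K=1$ the exponential factor is $1$, so the stopped gain is time-independent); that is what makes your first decomposition $F(x,t)=P(M_t\le b)+P(M_t>b,\,X(t)\le y_0)$ legitimate.
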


	\begin{proof} Fix $K=1$. By Lemma~\ref{lemma: g_t for stopped GBM},
		the stopped GBM $\widetilde{S}(t)$ leads to 
		the cumulative trading profit or loss
		\[
		g(t) = u_0 \left( \frac{\widetilde{S}(t)}{S_0} - 1\right),
		\] 
		and we note that $g(t) \geq g_*$ for all $t$. Hence,  $g_*$ is the worst case, for~$x < g_*$, the cumulative distribution function~(CDF) is~$F(x,t) \equiv 0$. 
		Hence, we consider~$x \ge g_*$.
		Now, using a result in~\cite{Albanese_Camolieti_2005} for an arithmetic Brownian motion~(ABM)~$\mathcal{X}(t)$ with drift~$\alpha$ and volatility~$\sigma>0$ with stop~$\mathcal{X}_* < \mathcal{X}(0)$. 
		Namely, for~$x \ge \mathcal{X}_*$, the corresponding stopped ABM, call it~$\widetilde{\mathcal{X}}(t)$, has CDF
		\begin{align*}
		P({\widetilde{\mathcal{X}}(t)} \le x)  
		& =  1 - \Phi \left( {\frac{{\mathcal{X}\left( 0 \right) - x + \alpha t}}{{\sigma \sqrt t }}} \right)  + {e^{\frac{{2 \alpha }}{{{\sigma ^2}}}\left( {{\mathcal{X}_*} - \mathcal{X}\left( 0 \right)} \right)}}\Phi \left( {\frac{{2{\mathcal{X}_*} - \mathcal{X}\left( 0 \right) - x + \alpha t}}{{\sigma \sqrt t }}} \right).
		\end{align*}
		Now, returning to the buy and hold strategy, we apply Ito's Rule to transform from ABM to GBM by using $\mathcal{X}(t) = \log S(t)$ and $\alpha = \mu - \frac{1}{2}\sigma^2$. That is,
		Noting that $g(t) \le x$ corresponds to 
		$$
		\widetilde{S}(t) \le S_0 \left( \frac{x + u_0}{u_0} \right),
		$$ 
		we obtain
		\begin{align*}
		F(x,t)
		&=P(g(t) \le x) \\[.5ex]
		& = P\left( \widetilde S(t) \le S_0\left( \frac{x + u_0}{u_0} \right) \right)\\[.5ex]
		& = P\left( \log \widetilde S(t) \le \log S_0\left( \frac{x + u_0}{u_0} \right) \right)\\[.5ex]
		& = 1 - \Phi \left( {\frac{{\log \left( {\frac{{{u_0}}}{{x + {u_0}}}} \right) + \left( {\mu  - \frac{1}{2}{\sigma ^2}} \right)t}}{{\sigma \sqrt t }}} \right) \\[.5ex]
		& \hspace{5mm} +{\left( {\frac{{{S_*}}}{{{S_0}}}} \right)^{\frac{{2\mu }}{{{\sigma ^2}}} - 1}}\Phi \left( {\frac{{\log \left( {{{\left( {\frac{{{S_*}}}{{{S_0}}}} \right)}^2}\frac{{{u_0}}}{{x + {u_0}}}} \right) + \left( {\mu  - \frac{1}{2}{\sigma ^2}} \right)t}}{{\sigma \sqrt t }}} \right)\\[.5ex]
		& =1 - \Phi \left( {{X}\left( {  {\frac{{{u_0}}}{{x + {u_0}}}} ,t} \right)} \right)  + Z_* \Phi \left( {{X}\left( {  {{{\left( {\frac{{{S_*}}}{{{S_0}}}} \right)}^2}\frac{{{u_0}}}{{x + {u_0}}}} ,t} \right)} \right). 
		\end{align*}
	\end{proof}
	
	\subsection{Remarks}\label{subsec: Remarks for K=1}
	Consider $K=1$, using the preliminary notations introduced in Section~\ref{subsec: Preliminaries}, we recover the corresponding CDF for trading profit or loss $g(t)$ without stop; i.e., for~$x \le -u_0$, $ F_0(x,t) \equiv 0, $ and, for $x> -u_0$,
	\begin{align*}
	{F_0}(x,t) 
	&  = 1 - \Phi \left( {{X}\left( {  {\frac{{{u_0}}}{{x + {u_0}}}} ,t} \right)} \right).
	\end{align*}
	Suppose the stop price $S_* \to 0$.  
	Then we have $g_* \to -u_0$ and for $x > g_*$, the CDF becomes
	\begin{align*}
	F\left( {x,t} \right) 
	& \to 1 - \Phi \left( {{X}\left( {  {\frac{{{u_0}}}{{x + {u_0}}}} ,t} \right)} \right) = F_0(x,t).
	\end{align*}
	Namely, for $x > -u_0$, the CDF $F(x,t)$ reduces to the formula for $F_0(x,t)$.
	
	\subsection{Illustrative Example for \texorpdfstring{$K=1$}{K=1}} \label{subsec: Example for K=1}
	To understand the effects of stop loss order on stock trading, we consider the following simple example: Let~$K=1$ and take initial stock price~$S_0 =1$, initial investment~$u_0 = 1$, time $t=1$, volatility~$\sigma=1$, drift~$\mu=1/2$, and stop price $S_*=1/2$. 
	Note that the ratio $\mu/\sigma^2 \le 1/2$, which implies a downward trending stock prices. Thus, the specified stop price $S_*$ plays a role as a protection for the trading loss.
	Figure~\ref{fig:buynhold_MonteCarlo_pdf} shows the associated CDF plot for the trading profit or loss. 
	As depicted in the figure, the two solid lines (blue and red colored lines) are for the theoretical CDFs $F_0(x,t)$ and $F(x,t)$, respectively; and the dotted-line and dashed line indicate the corresponding CDF plots which are generated via Monte-Carlo simulations.
	The worst case of trading profit or loss for the trade without stop order is $-u_0 = -1$, but for the trade with stop order, the worst case of trading or loss becomes~$ g_* =  -0.5$. 
	The associated CDF at $x=g_*$ is $F(g_*,t) \approx 0.4882.$
	Furthermore, the CDF  $F(x,t) $ coincides with $F_0(x,t)$ when $x$ is sufficiently large. Said another way, this indicates that there is no difference between $F_0(x,t)$ and~$F(x,t)$ if the trade is winning; however, we should emphasize again that the trade with stop order gives a protection for losing trade. 
	The CDF of $g(t)$ in Figure~\ref{fig:buynhold_MonteCarlo_pdf} indeed matches perfectly with the one generated by using Monte-Carlo simulations using the~$50,000$ GBM sample paths and same parameters setting described above.
	
	
	%
	\begin{figure}
		\centering
		\includegraphics[scale=0.6]{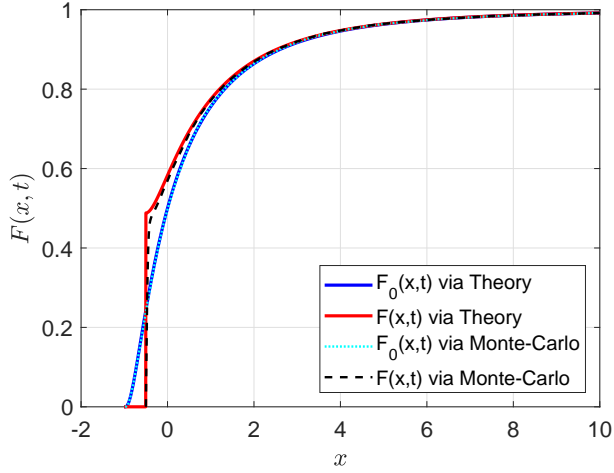}
		\caption{CDF plots for the bold trade ($K=1$) with and without stop-loss order.}
		\label{fig:buynhold_MonteCarlo_pdf}
	\end{figure}
	
	
	\vspace{5mm}
	\section{Bold Investment (\texorpdfstring{$K>1$}{K>1})}\label{SEC: CDF for Leverage Case}
	In this section, we now move to analyze the scenario for $K>1$, which we call the \textit{bold investment} case.
	
	\begin{theorem} \label{TheoremCDF2}
		For $t > 0$, 
		consider an affine feedback controller with gain \mbox{$K>1$} and a stop price~\mbox{$S_*< S_0$}.  Then, the CDF for the cumulative trading profit or loss $F(x,t)$  is described as follows:  For $x \le g_{*}(t)$, $F(x,t ) \equiv 0$,~for  
		\[
		g_*(t) < x < \frac{u_0}{K} \left[ {{{\left( {\frac{{{S_*}}}{{{S_0}}}} \right)}^K} - 1} \right],
		\]
		we have
		\begin{align*}
		F(x,t) 	&=   \Phi \left( {X\left( {\frac{{{S_0}}}{{{S_*}}},A(x)} \right)} \right) - {Z_*}\Phi \left( {X\left( {\frac{{{S_*}}}{{{S_0}}},A(x)} \right)} \right)\\[.5ex]
		&  \ \ \ \  - \Phi \left( {X\left( {\frac{1}{{B(x,t)}},t} \right)} \right)+ {Z_*}\Phi \left( {X\left( {{{\left( {\frac{{{S_*}}}{{{S_0}}}} \right)}^2}\frac{1}{{B(x,t)}},t} \right)} \right),
		\end{align*}
		and for 
		$
		x \ge \frac{{{u_0}}}{K}\left[ {{{\left( {\frac{{{S_*}}}{{{S_0}}}} \right)}^K} - 1} \right],
		$
		we get
		\begin{align*}
		F(x,t) 	&=	  1  - \Phi \left( {X\left( {\frac{1}{{B(x,t)}},t} \right)} \right)   + {Z_*}\Phi \left( {X\left( {{{\left( {\frac{{{S_*}}}{{{S_0}}}} \right)}^2}\frac{1}{{B(x,t)}},t} \right)} \right)  .
		\end{align*}
	\end{theorem}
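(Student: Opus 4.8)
The plan is to condition on whether the stop has been triggered by time $t$, i.e. to split on the events $\{t^* > t\}$ (trade still active) and $\{t^* \le t\}$ (trade already stopped), and to handle the two contributions by the first-passage and reflection machinery for drifted Brownian motion after the logarithmic change of variables $\mathcal{X}(t) = \log S(t)$, $\alpha = \mu - \tfrac12\sigma^2$. The decisive observation, which is what makes the $K>1$ case genuinely different from the buy-and-hold case of Theorem~\ref{theorem1}, is the following. On $\{t^* > t\}$ the trade is active and Lemma~\ref{lemma: g_t for stopped GBM} expresses $g(t)$ as a strictly increasing function of $S(t)$, so that $g(t)\le x \iff S(t)\le S_0 B(x,t)$. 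On $\{t^*\le t\}$, however, the position is liquidated and the realized gain is held at its value at the stopping instant, $g(t)=g_*(t^*)$. Since $K>1$ forces $K-K^2<0$, the map $s\mapsto g_*(s)$ is \emph{strictly decreasing}, carrying $[0,t]$ onto $[g_*(t),g_*(0)]$ with $g_*(0)=\frac{u_0}{K}[(S_*/S_0)^K-1]$; consequently the stopped paths do not pile up at a single atom but are smeared across $[g_*(t),g_*(0)]$, and $g_*(t^*)\le x$ is equivalent to $t^*\ge A(x)$, where one verifies directly that $A(x)$ from~\eqref{eq: A_x} is exactly the inverse $g_*^{-1}(x)$.

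With this in hand I would first read off the two threshold regimes. Because $g(t)\ge g_*(t)$ on every path, $F(x,t)\equiv 0$ for $x\le g_*(t)$. For $x\ge g_*(0)$ one has $A(x)\le 0$, so every stopped path already satisfies $g(t)=g_*(t^*)\le g_*(0)\le x$ and the stopped contribution is the full stopping probability $P(t^*\le t)$. For $g_*(t)<x<g_*(0)$ one has $A(x)\in(0,t)$, so only paths stopped late enough qualify and the stopped contribution is $P(A(x)\le t^*\le t)=P(t^*\le t)-P(t^*\le A(x))$.

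Next I would assemble the three closed-form ingredients. The first-passage distribution of the drifted Brownian motion to the level $\log S_*<\log S_0$ gives, for $s>0$, $P(t^*\le s)=1-\Phi(X(S_0/S_*,s))+Z_*\Phi(X(S_*/S_0,s))$; this is the source of the time argument $A(x)$ appearing in the middle-region formula. For the active contribution I would reuse the stopped-ABM cumulative distribution function of~\cite{Albanese_Camolieti_2005} already invoked in Theorem~\ref{theorem1}: writing $b=S_0B(x,t)$ and using $\widetilde S(t)=S_*$ on $\{t^*\le t\}$, it yields $P(\widetilde S(t)\le b)=1-\Phi(X(1/B(x,t),t))+Z_*\Phi(X((S_*/S_0)^2/B(x,t),t))$ for $b\ge S_*$, whence $P(t^*>t,\,S(t)\le b)=P(\widetilde S(t)\le b)-P(t^*\le t)$.

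Finally I would combine the pieces. For $x\ge g_*(0)$, $F(x,t)=P(t^*\le t)+P(t^*>t,\,S(t)\le b)=P(\widetilde S(t)\le b)$, which is precisely the stated upper-region formula. For $g_*(t)<x<g_*(0)$, $F(x,t)=[P(t^*\le t)-P(t^*\le A(x))]+[P(\widetilde S(t)\le b)-P(t^*\le t)]$; here the two copies of $P(t^*\le t)$ cancel, the terms $\Phi(X(S_0/S_*,t))$ and $Z_*\Phi(X(S_*/S_0,t))$ cancel between $P(t^*\le A(x))$ and $P(\widetilde S(t)\le b)$, and what remains is exactly the middle-region formula with leading terms $\Phi(X(S_0/S_*,A(x)))-Z_*\Phi(X(S_*/S_0,A(x)))$. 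As a consistency check I would let $x\to g_*(0)^-$, so $A(x)\to 0^+$, making $\Phi(X(S_0/S_*,A(x)))\to 1$ and $\Phi(X(S_*/S_0,A(x)))\to 0$, which recovers the upper-region formula and confirms continuity at $g_*(0)$. The main obstacle is entirely conceptual and lives in the first paragraph: recognizing that the frozen gain on stopped paths is $g_*(t^*)$ rather than a single atom, and that its monotone inversion is the first-passage time $A(x)$; once this is seen, the rest is bookkeeping of the first-passage and reflection formulas, whose cross-terms cancel as indicated.
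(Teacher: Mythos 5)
Your proposal is correct and follows essentially the same route as the paper: the same decomposition over $\{t^*\le t\}$ and $\{t^*>t\}$, the same inversion of the strictly decreasing map $s\mapsto g_*(s)$ (so that $g_*(t^*)\le x \iff t^*\ge A(x)$) producing the three regimes separated by $g_*(t)$ and $\frac{u_0}{K}\left[\left(\frac{S_*}{S_0}\right)^K-1\right]$, and the same first-passage/reflection formulas, which the paper packages as Lemmas~\ref{lemma: Prob of g and stopped for K bigger than 1} and~\ref{lemma: Prob of g and not yet stopped for K bigger than 1}. The only cosmetic difference is in the active-trade term: you obtain it by subtracting $P(t^*\le t)$ from the stopped-process marginal $P(\widetilde S(t)\le S_0 B(x,t))$, whereas the paper reaches the identical expression from the joint law $P(S(t)\ge S_0 B(x,t),\, t^*\ge t)$ combined with the law of total probability.
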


	To prove Theorem \ref{TheoremCDF2}, some preliminary results, provided in the next subsection to follow, are needed.
	
	\subsection{Preliminaries} \label{subsec: Preliminaries}
	As mentioned in Section~\ref{SEC: Problem Formulation}, we recall that $t^*$ is the random stopping time satisfying
	$
	t^* := \inf \{t \geq 0: S(t) \le S_* \}.
	$ 
	For $S_0 > S_*$, it is well-known that the CDF for stopping time of GBM price has closed-form expression; e.g., see \cite{Albanese_Camolieti_2005}; namely, for~$t>0$,
	\begin{align}\label{eq: CDF for stopping time}
	P({t^*} \le t) = 1 - \Phi \left( {X\left( {\frac{{{S_0}}}{{{S_*}}},t} \right)} \right) + {Z_*}\Phi \left( {X\left( {\frac{{{S_*}}}{{{S_0}}},t} \right)} \right).
	\end{align}
	Furthermore, the joint probability for GBM and stopping time, with simple modification, is also discussed in \cite{Albanese_Camolieti_2005}; i.e., for $x,t>0$,
	\begin{align}\label{eq: joint prob for St and tstar}
	P(S(t) \ge x,{t^*} \ge t)&= \Phi \left( X\left( {\frac{S_0}{{x}},t} \right) \right)  - {Z_*}\Phi \left( {X\left( {\frac{{S_*^2}}{{{S_0}x}},t} \right)} \right).
	\end{align}
	As seen later in this paper, the two equations(\ref{eq: CDF for stopping time}) and (\ref{eq: joint prob for St and tstar}) play an important role in our derivation to follow.
	With the aids of these preliminaries, we first show Lemmas~\ref{lemma: Prob of g and stopped for K bigger than 1} and \ref{lemma: Prob of g and not yet stopped for K bigger than 1},  which are useful for proving Theorem~\ref{TheoremCDF2}.
	
	\begin{lemma} \label{lemma: Prob of g and stopped for K bigger than 1}
		For $t > 0$, 
		consider an affine feedback controller with gain $K>1$ and a stop price~\mbox{$S_*< S_0$}.  Then,
		\begin{align*}
		P\left( g(t) \le x , t^* \le t \right)  = 
		\begin{cases}
		1-\Omega(t), & {\rm if } \, x \geq \frac{u_0}{K}\left[ \left(\frac{S_*}{S_0}\right)^K-1 \right];\\
		\Omega(A(x))-\Omega(t), & {\rm if}\, g_*(t)<x<\frac{u_0}{K}\left[ \left(\frac{S_*}{S_0}\right)^K - 1\right];\\
		0, & {\rm if} \, x \leq g_*(t).
		\end{cases}
		\end{align*}
		where
		\begin{align*}
		& \Omega(t) =  \Phi \left( {X\left( {\frac{{{S_0}}}{{{S_*}}},t} \right)} \right) - {Z_*}\Phi \left( {X\left( {\frac{{{S_*}}}{{{S_0}}},t} \right)} \right);\\[.5ex]
		& \Omega(A(x)) =  \Phi \left( {X\left( {\frac{{{S_0}}}{{{S_*}}},A(x)} \right)} \right) - {Z_*}\Phi \left( {X\left( {\frac{{{S_*}}}{{{S_0}}},A(x)} \right)} \right).
		\end{align*}
	\end{lemma}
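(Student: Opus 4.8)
The plan is to reduce the two–dimensional event $\{g(t)\le x,\ t^*\le t\}$ to a one–dimensional event involving only the stopping time $t^*$, and then to evaluate the latter with the closed–form hitting–time CDF in~\eqref{eq: CDF for stopping time}. The starting observation is that on $\{t^*\le t\}$ the stop has already triggered, so the control is set to $u(t)=0$ for $t\ge t^*$; since $dg=(dS/S)u$, the profit–loss is frozen after $t^*$ at its value at the stopping instant. Because $S(t^*)=S_*$, Lemma~\ref{lemma: g_t for stopped GBM} evaluated at $t=t^*$ gives $g(t)=g(t^*)=g_*(t^*)$ on this event, where $g_*(\cdot)$ is the map in~\eqref{eq:g_star_t} but evaluated at the \emph{random} time $t^*$. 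Consequently,
\[
\{g(t)\le x,\ t^*\le t\}=\{g_*(t^*)\le x,\ t^*\le t\},
\]
and everything reduces to understanding the sublevel sets of the deterministic function $\tau\mapsto g_*(\tau)$.

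Next I would record the monotonicity of $g_*$. For $K>1$ the exponent coefficient $\tfrac12\sigma^2(K-K^2)=\tfrac12\sigma^2K(1-K)$ is strictly negative, so $g_*(\tau)$ is strictly decreasing in $\tau$, with $g_*(0)=\tfrac{u_0}{K}[(S_*/S_0)^K-1]$ and $g_*(\tau)\downarrow -u_0/K$ as $\tau\to\infty$. A direct computation—solving $g_*(\tau)=x$ by isolating the exponential and taking logarithms—shows that the unique root is exactly $A(x)$ from~\eqref{eq: A_x}. Strict monotonicity then converts the level comparison into a comparison of times,
\[
\{g_*(t^*)\le x\}=\{t^*\ge A(x)\},
\]
so that $\{g(t)\le x,\ t^*\le t\}=\{A(x)\le t^*\le t\}$.

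Finally I would split on the location of $A(x)$ relative to $[0,t]$, which translates back into the three stated ranges of $x$: $A(x)\le 0\iff x\ge g_*(0)$, the threshold $\tfrac{u_0}{K}[(S_*/S_0)^K-1]$; $A(x)\ge t\iff x\le g_*(t)$; and the middle regime $0<A(x)<t\iff g_*(t)<x<g_*(0)$. Writing $P(t^*\le\tau)=1-\Omega(\tau)$ from~\eqref{eq: CDF for stopping time} with $\Omega$ as in the statement, the event $\{A(x)\le t^*\le t\}$ has probability $(1-\Omega(t))-(1-\Omega(A(x)))=\Omega(A(x))-\Omega(t)$ in the middle regime, probability $1-\Omega(t)$ when $A(x)\le 0$, and probability $0$ when $A(x)\ge t$. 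At this last step I would invoke the continuity of the GBM hitting–time distribution to pass freely between strict and non–strict inequalities and to discard the boundary events $\{t^*=A(x)\}$ and $\{t^*=t\}$. The main obstacle—and the step deserving the most care—is the first one: correctly arguing that on $\{t^*\le t\}$ the relevant quantity is $g_*$ evaluated at the random stopping time $t^*$ rather than at the deterministic horizon $t$, since it is precisely this freezing that produces the nontrivial middle case. The monotone inversion through $A(x)$ and the bookkeeping of the three $x$–ranges are then routine.
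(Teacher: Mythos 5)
Your proposal is correct and takes essentially the same route as the paper's proof: both reduce the event to one involving only the stopping time via the frozen gain $g(t)=g(t^*)$ on $\{t^*\le t\}$, invert the monotone map through $A(x)$ to get $\{t^*\ge A(x),\,t^*\le t\}$, and then split on the position of $A(x)$ relative to $[0,t]$ before applying the hitting-time CDF in equation~(\ref{eq: CDF for stopping time}). Your explicit appeal to strict monotonicity of $g_*(\cdot)$ and to continuity of the hitting-time law at the boundary $x=g_*(t)$ is just a cleaner packaging of the algebra and the separate $x=g_*(t)$ case the paper works through directly.
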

	
	\begin{proof}
		For $t^* \le t$, Lemma~\ref{lemma: g_t for stopped GBM} tells us that the corresponding trading gain is
		\begin{align}\label{eq:  g t_star}
		g(t) = g(t^*) ={\frac{{{u_0}}}{K}\left[ {{{\left( {\frac{{{S_*}}}{{{S_0}}}} \right)}^K}{e^{\frac{1}{2}{\sigma ^2}(K - {K^2}){t^*}}} - 1} \right]}.
		\end{align}
		Hence, we may write
		$P\left( {g(t) \le x, {t^*} \le t} \right) = P\left( {g\left( {{t^*}} \right) \le x , t^* \le t} \right).$
		In addition, since $t^*\leq t$, it follows that $g(t^*)\geq g_*(t)$ and $g_*(t)$ is the worst case. Thus, for $x < g_*(t)$, the the joint probability is
		$
		P\left( {g(t) \le x, {t^*} \le t} \right)  = 0.
		$
		
		On the other hand, we consider for $x \ge g_{*}(t)$. 
		Using equation~(\ref{eq:  g t_star}), we write the joint CDF formula as follows:
		\begin{align}\label{eq: Prob_g less than x and tstar less than t}
		P\left( {g(t) \le x ,{t^*} \le t} \right) = P\left( { {\frac{{{u_0}}}{K}\left[ {{{\left( {\frac{{{S_*}}}{{{S_0}}}} \right)}^K}{e^{\frac{1}{2}{\sigma ^2}(K - {K^2}){t^*}}} - 1} \right] \le x} , {t^*} \le t} \right).
		\end{align}
		Now, we break into two cases: For $x = g_*(t)$, with the aid of equation~(\ref{eq:g_star_t}), we have
		\begin{align*}
		P\left( {g(t) \le x ,{t^*} \le t} \right) 
		&= P\left( { {\frac{{{u_0}}}{K}\left[ {{{\left( {\frac{{{S_*}}}{{{S_0}}}} \right)}^K}{e^{\frac{1}{2}{\sigma ^2}(K - {K^2}){t^*}}} - 1} \right] \le g_*(t)} , {t^*} \le t} \right) \\[.5ex]
		& = P\left( {{t^*} \ge t,{t^*} \le t} \right) =0.
		\end{align*}
		On the other hand, for $x > g_*(t)$, we write the equation~(\ref{eq: Prob_g less than x and tstar less than t}) as follows:
		\begin{align*}
		P\left( {g(t) \le x ,{t^*} \le t} \right) & = P\left( { {{t^*} \ge \frac{{ - 2}}{{{\sigma ^2}K(K - 1)}}\log \left[ {{{\left( {\frac{{{S_0}}}{{{S_*}}}} \right)}^K}\left( {\frac{K}{{{u_0}}}x + 1} \right)} \right]} , {t^*} \le t} \right) 
		\end{align*}
		and it is readily verified that right-hand side involving logarithm is well-defined. Now, we define
		\begin{align*}
		P\left( {g(t) \le x ,{t^*} \le t} \right) 
		& := P\left( {{t^*} \ge A\left( x \right),  t^* \le t} \right)
		\end{align*}
		where
		\[
		A(x) = {\frac{{ -2}}{{{\sigma ^2}K(K - 1)}}\log \left[ {{{\left( {\frac{{{S_0}}}{{{S_*}}}} \right)}^K}\left( {\frac{K}{{{u_0}}}x + 1} \right)} \right]}.
		\]
		Note that the sign of $A(x)$ depends on the logarithm part; i.e., if the argument of logarithm function is within the interval $(0,1)$, then the associated function value is negative; however, if the argument of  logarithm function is greater than or equal to one, then the associated function value is non-negative. Therefore, we have two subcases to analyze in this $x  >  g_{*}(t)$ scenario:
		
		{\it Case 1:} For 
		\[ 
		{g_*}(t) < x < \frac{{{u_0}}}{K}\left[ {{{\left( {\frac{{{S_*}}}{{{S_0}}}} \right)}^K} - 1} \right],
		\] within such range of $x$, we get $A(x) > 0$. Thus, the joint CDF formula becomes
		\begin{align*}
		P\left( {g(t) \le x, {t^*} \le t} \right)
		&  = P({t^*} \ge A(x), {t^*} \le t)\\[.5ex]
		& = {{P\left( {A(x) \le {t^*} \le t} \right)}}\\[.5ex]
		& = {{P\left( {{t^*} \le t} \right) - P\left( {{t^*} \le A\left( x \right)} \right)}}\\[.5ex]
		&  =  - \Phi \left( {X\left( {\frac{{{S_0}}}{{{S_*}}},t} \right)} \right) + {Z_*}\Phi \left( {X\left( {\frac{{{S_*}}}{{{S_0}}},t} \right)} \right) \\[.5ex]
		& \ \ \ \ \ \ \ + \Phi \left( {X\left( {\frac{{{S_0}}}{{{S_*}}},A\left( x \right)} \right)} \right) - {Z_*}\Phi \left( {X\left( {\frac{{{S_*}}}{{{S_0}}},A\left( x \right)} \right)} \right)
		\end{align*} 
		where the last equality holds by applying the preliminary result of the CDF for the stopping time; i.e., equation~(\ref{eq: CDF for stopping time}).
		
		{\it Case 2:} For
		\[
		x \ge  \frac{{{u_0}}}{K}\left[ {{{\left( {\frac{{{S_*}}}{{{S_0}}}} \right)}^K} - 1} \right],
		\]within such range of $x$, the function $A(x) \le 0$; however, since the stopping time $t^*$ is non-negative, we get
		\begin{align*}
		P\left( {g(t) \le x , {t^*} \le t} \right) 
		&= P\left( {{t^*} \ge A\left( x \right) , t^* \le t} \right) \\[.5ex]
		& = P( t^* \le t)\\[.5ex]
		& =1 - \Phi \left( {X\left( {\frac{{{S_0}}}{{{S_*}}},t} \right)} \right)  + {Z_*}\Phi \left( {X\left( {\frac{{{S_*}}}{{{S_0}}},t} \right)} \right)
		\end{align*}
		where the last equality holds by applying equation~(\ref{eq: CDF for stopping time}) again. To sum up, we obtain
		\begin{align*}
		&P\left( g(t) \le x , t^* \le t \right) = \left\{ {\begin{array}{*{20}{l}}
			{1 - \Omega(t),\begin{array}{*{20}{c}}
				{}&{}&{}&{} \ \
				\end{array}{\rm{if}} \ x \ge \frac{{{u_0}}}{K}\left[ {{{\left( {\frac{{{S_*}}}{{{S_0}}}} \right)}^K} - 1} \right]}\\[.5ex]
			{{{ \Omega \left( {A\left( x \right)} \right) - \Omega \left( t \right)  }},\begin{array}{*{20}{c}}
				{}
				\end{array}{\rm{if}}\;g_{*}(t) < x < \frac{{{u_0}}}{K}\left[ {{{\left( {\frac{{{S_*}}}{{{S_0}}}} \right)}^K} - 1} \right]}\\[.5ex]
			{0,\begin{array}{*{20}{c}}
				{}&{}&{}&{}&{} &{} &{} \ \
				\end{array}{\rm{if}}\;x \le g_{*}(t)}
			\end{array}} \right.
		\end{align*}
		where
		\begin{align*}
		& \Omega(t) =  \Phi \left( {X\left( {\frac{{{S_0}}}{{{S_*}}},t} \right)} \right) - {Z_*}\Phi \left( {X\left( {\frac{{{S_*}}}{{{S_0}}},t} \right)} \right);\\[.5ex]
		& \Omega(A(x)) =  \Phi \left( {X\left( {\frac{{{S_0}}}{{{S_*}}},A(x)} \right)} \right) - {Z_*}\Phi \left( {X\left( {\frac{{{S_*}}}{{{S_0}}},A(x)} \right)} \right)
		\end{align*}
		and the proof is complete.
	\end{proof}
	
	\begin{lemma} \label{lemma: Prob of g and not yet stopped for K bigger than 1}
		For $t > 0$, 
		consider an affine feedback controller with gain $K>1$ and a stop price~\mbox{$S_*< S_0$}.  Then,
		\begin{align*}
		P\left( {g(t) \le x , {t^*} > t} \right)= \begin{cases}
		\Omega(t) - \Theta(x,t), & {\rm if } \, x> g_*(t);\\
		0, & {\rm if} \, x \leq g_*(t)
		\end{cases}
		\end{align*}
		where
		\[
		\Theta(x,t) := \Phi \left( X\left( {\frac{1}{B\left( {x,t} \right)},t} \right) \right) - Z_*\Phi \left( {X\left( {{{\left( {\frac{{{S_*}}}{{{S_0}}}} \right)}^2}\frac{1}{{B\left( {x,t} \right)}},t} \right)} \right)
		\] and $\Omega(t)$ is defined in Lemma~\ref{lemma: Prob of g and stopped for K bigger than 1}.
	\end{lemma}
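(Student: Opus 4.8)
The plan is to condition on the event $\{t^* > t\}$, on which the stop has not yet been triggered, so that the stopped price coincides with the raw GBM; i.e.\ $\widetilde{S}(t) = S(t)$ on this event. By Lemma~\ref{lemma: g_t for stopped GBM}, the gain there is $g(t) = \frac{u_0}{K}\big( (S(t)/S_0)^K e^{\frac{1}{2}\sigma^2(K-K^2)t} - 1\big)$, exactly the unstopped expression. Mirroring the proof of Lemma~\ref{lemma: CDF for g(t) without stop order}, I would first split into the two cases $x \le g_*(t)$ and $x > g_*(t)$ and handle them separately.

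For $x \le g_*(t)$: on $\{t^* > t\}$ we have $S(\tau) > S_*$ for all $\tau \in [0,t]$, hence in particular $S(t) > S_*$. Since $g(t)$ is strictly increasing in $S(t)$ for $K>0$ and equals $g_*(t)$ precisely when $S(t)=S_*$ (see~(\ref{eq:g_star_t})), it follows that $g(t) > g_*(t) \ge x$ on this event. Thus $\{g(t) \le x\} \cap \{t^* > t\}$ is (almost surely) empty and the joint probability vanishes, which gives the second branch of the claim.

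For $x > g_*(t)$: I would recast the event $\{g(t) \le x\}$, restricted to $\{t^* > t\}$, as a level condition on $S(t)$. Using the same algebra as in Lemma~\ref{lemma: CDF for g(t) without stop order} together with the definition~(\ref{eq: B_x_t}) of $B(x,t)$, the inequality $g(t) \le x$ is equivalent to $S(t) \le S_0 B(x,t)$, so that $P(g(t) \le x, t^* > t) = P(S(t) \le S_0 B(x,t), t^* > t)$. I would then use the disjoint splitting $\{t^* > t\} = \{S(t)\le S_0 B(x,t),\, t^* > t\} \sqcup \{S(t) > S_0 B(x,t),\, t^* > t\}$ to write $P(g(t)\le x, t^* > t) = P(t^* > t) - P(S(t) > S_0 B(x,t), t^* > t)$.

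The two terms are then read off directly from the preliminaries. First, $P(t^* > t) = 1 - P(t^* \le t)$, and substituting the stopping-time CDF~(\ref{eq: CDF for stopping time}) collapses this exactly to $\Omega(t)$ as defined in Lemma~\ref{lemma: Prob of g and stopped for K bigger than 1}. Second, applying the joint formula~(\ref{eq: joint prob for St and tstar}) with threshold $S_0 B(x,t)$ and simplifying the arguments via $S_0/(S_0 B) = 1/B$ and $S_*^2/(S_0^2 B) = (S_*/S_0)^2/B$ yields precisely $\Theta(x,t)$. Combining gives $\Omega(t) - \Theta(x,t)$, as claimed. The one point needing care---and the main, if mild, obstacle---is the interchange between the strict event $\{t^* > t\}$ and the closed event $\{t^* \ge t\}$ appearing in~(\ref{eq: joint prob for St and tstar}), together with $\{S(t) > \cdot\}$ versus $\{S(t) \ge \cdot\}$; since both $t^*$ and $S(t)$ have continuous (atomless) distributions under the GBM model, these events differ only by null sets and the substitutions are legitimate.
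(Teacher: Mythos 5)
Your proposal is correct and follows essentially the same route as the paper: the same case split on $x$ versus $g_*(t)$, the same recasting of $\{g(t)\le x\}\cap\{t^*>t\}$ as $\{S(t)\le S_0 B(x,t)\}\cap\{t^*>t\}$, and the same final appeal to the stopping-time CDF~(\ref{eq: CDF for stopping time}) and the joint formula~(\ref{eq: joint prob for St and tstar}) to identify $\Omega(t)$ and $\Theta(x,t)$. The only (cosmetic) difference is that you complement directly via $P(A\cap B)=P(B)-P(A^{c}\cap B)$, whereas the paper reaches the same two terms through the law of total probability plus a two-variable joint-CDF identity; your explicit handling of the null-set distinction between strict and non-strict events, and of the vanishing case $x\le g_*(t)$, is if anything more careful than the paper's.
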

	
	\begin{proof}
		Next, we move to compute $P(g(t) \le x , t^* > t)$. 
		For $t^* > t$, Lemma~\ref{lemma: g_t for stopped GBM} tells us that
		\[
		g(t) = \frac{{{u_0}}}{K}\left[ {{{\left( {\frac{{S(t)}}{{{S_0}}}} \right)}^K}{e^{\frac{1}{2}{\sigma ^2}(K - {K^2})t}}\; - 1} \right].
		\]
		Therefore,  the corresponding probability~\mbox{$P ( g(t) \le x , t^* > t )$} satisfies 
		\begin{align*}
		P\left( {g(t) \le x , {t^*} > t} \right)  = P\left( { {\frac{{{u_0}}}{K}\left[ {{{\left( {\frac{{S(t)}}{{{S_0}}}} \right)}^K}	{e^{\frac{1}{2}{\sigma ^2}(K - {K^2})t}} - 1} \right] \le x} , {t^*} > t} \right).
		\end{align*}
		For $x< g_{*}(t)$, $P\left( {g(t) \le x, {t^*} > t} \right) =0$. On the other hand, for $x \ge g_{*}(t)$, with the aid of equation~(\ref{eq: B_x_t}), we have
		\begin{align}\label{eq: prob S_t leq x and t_start less t}
		P\left( g(t) \le x, t^* > t \right) = P\left( S(t) \le S_0 B(x,t),{t^*} > t \right).
		\end{align} 
		Now, by the law of total probability; e.g., see~\cite{Gubner_2006}, we have
		\begin{align*}
		P(S\left( t \right) \le S_0 B(x,t)) &= P(S\left( t \right) \le S_0 B(x,t),{t^*} > t) + P(S\left( t \right) \le S_0 B(x,t),{t^*} \le t).
		\end{align*}
		It implies that the joint CDF 
		\begin{align*}
		P(S\left( t \right) \le S_0 B(x,t),{t^*} > t) &= P(S\left( t \right) \le S_0 B(x,t))  - P(S\left( t \right) \le S_0 B(x,t),{t^*} \le t).
		\end{align*}
		Note that the second term of the equality above can be determined by using the following fact: That is,
		given any two continuous random variables $(X,Y)$, it is readily verified that the identity 
		\begin{align*}
		P(X \ge x,Y \ge y) &+ P(X \le x) + P(Y \le y)   = 1 + P(X \le x,Y \le y) 
		\end{align*}
		holds.
		Thus, using equation~(\ref{eq: prob S_t leq x and t_start less t}) and applying the above fact, we write the joint CDF as follows
		\begin{align*}
		P\left( g(t) \le x,{t^*} > t \right)
		& = P(S\left( t \right) \le S_0 B(x,t),{t^*} > t) \\[.5ex]
		& = P(S\left( t \right) \le S_0 B(x,t))  - P(S\left( t \right) \le S_0 B(x,t),{t^*} \le t) \\[.5ex]
		& = 1 - P({t^*} \le t) - P(S\left( t \right) \ge S_0 B(x,t),{t^*} \ge t) \\[.5ex]
		& =   \Phi \left( {X\left( {\frac{{{S_0}}}{{{S_*}}},t} \right)} \right) - {Z_*}\Phi \left( {X\left( {\frac{{{S_*}}}{{{S_0}}},t} \right)} \right) \\[.5ex]
		& \ \ \ \ \ \  - \Phi \left( {X\left( {\frac{1}{{B\left( {x,t} \right)}},t} \right)} \right) + {Z_*}\Phi \left( {X\left( {{{\left( {\frac{{{S_*}}}{{{S_0}}}} \right)}^2}\frac{1}{{B\left( {x,t} \right)}},t} \right)} \right)
		\end{align*}
		where the last equality holds by applying the equations~(\ref{eq: CDF for stopping time}) and (\ref{eq: joint prob for St and tstar}).
		
		Thus, we obtain 
		\begin{align*}
		&P\left( {g(t) \le x , {t^*} > t} \right)= \left\{ \begin{array}{l}
		\Omega (t) - \Theta \left( {x,t} \right)\begin{array}{*{20}{c}}
		{}
		\end{array},{\rm{if}}\;x > {g_*}(t)\\[.5ex]
		0\begin{array}{*{20}{c}}
		{}&{}&{}&{}&{}&{}\;\;\;
		\end{array},{\rm{if}}\;x \le {g_*}(t).
		\end{array} \right.
		\end{align*}
		where
		\[
		\Theta(x,t) := \Phi \left( {X\left( {\frac{1}{{B\left( {x,t} \right)}},t} \right)} \right) - {Z_*}\Phi \left( {X\left( {{{\left( {\frac{{{S_*}}}{{{S_0}}}} \right)}^2}\frac{1}{{B\left( {x,t} \right)}},t} \right)} \right).
		\]
	\end{proof}
	
	Equipped with Lemmas~\ref{lemma: Prob of g and stopped for K bigger than 1} and \ref{lemma: Prob of g and not yet stopped for K bigger than 1}, we are now ready to give a proof for Theorem~\ref{TheoremCDF2}.
	
	\begin{proof}[Proof of Theorem \ref{TheoremCDF2}]
		We first define the minimum of GBM over $[0,t]$; i.e.,
		\[
		m(t) := \min_{u \in [0,t]}S(u).
		\]
		To compute the CDF for the trading gain~$F(x,t)$, we apply the law of total probability; e.g., see~\cite{Gubner_2006}, and write 
		\begin{align}\label{eq:F(x,t)}
		F(x,t) 	& = P(g(t) \le x, m(t) \le S_*) + P(g(t) \le x, m(t) > S_* ). 
		\end{align}
		Note that the events 
		$
		\{ m(t) \le S_*\} = \{ t^* \le t \}$ and $
		\{ m(t) > S_*\} = \{t^* > t \}.
		$
		Thus, we rewrite equation (\ref{eq:F(x,t)}) and obtain
		\begin{align*}
		F(x,t) 	& = P(g(t) \le x, t^* \le t) + P(g(t) \le x, t^* >t ) .
		\end{align*}
		According to Lemmas~\ref{lemma: Prob of g and stopped for K bigger than 1} and \ref{lemma: Prob of g and not yet stopped for K bigger than 1}, we obtain:
		For $x \le  g_*(t)$, $F(x,t ) \equiv 0$, and,~for 
		\[
		g_{*}(t) < x < \frac{{ {u_0}}}{K}\left[ { {{\left( {\frac{{{S_*}}}{{{S_0}}}} \right)}^K}} -1 \right],
		\]
		we have
		\begin{align*}
		F(x,t) 	&=   \Phi \left( {X\left( {\frac{{{S_0}}}{{{S_*}}},A(x)} \right)} \right) - {Z_*}\Phi \left( {X\left( {\frac{{{S_*}}}{{{S_0}}},A(x)} \right)} \right)\\[.5ex]
		&  \ \ \ \  - \Phi \left( {X\left( {\frac{1}{{B(x,t)}},t} \right)} \right)+ {Z_*}\Phi \left( {X\left( {{{\left( {\frac{{{S_*}}}{{{S_0}}}} \right)}^2}\frac{1}{{B(x,t)}},t} \right)} \right),
		\end{align*}
		and for 
		\[
		x \ge  \frac{{ {u_0}}}{K}\left[ { {{\left( {\frac{{{S_*}}}{{{S_0}}}} \right)}^K}} -1 \right],
		\] we get
		\begin{align*}
		F(x,t) 	&=	  1  - \Phi \left( {X\left( {\frac{1}{{B(x,t)}},t} \right)} \right)    + {Z_*}\Phi \left( {X\left( {{{\left( {\frac{{{S_*}}}{{{S_0}}}} \right)}^2}\frac{1}{{B(x,t)}},t} \right)} \right). 
		\end{align*}
		
	\end{proof}
	
	\subsection{Remarks}\label{subsec: Remarks for K>1} Let $K>1.$
	letting the stop price $S_* \to 0$, then for
	$$x \geq \frac{u_0}{K}\left[ \left( {\frac{{{S_*}}}{{{S_0}}}} \right)^K - 1 \right] \to -u_0/K,$$ 
	Theorem~\ref{TheoremCDF2} tells us that the CDF 
	\begin{align*}
	F(x,t) 	&=	  1  - \Phi \left( {X\left( {\frac{1}{{B(x,t)}},t} \right)} \right)   + {Z_*}\Phi \left( {X\left( {{{\left( {\frac{{{S_*}}}{{{S_0}}}} \right)}^2}\frac{1}{{B(x,t)}},t} \right)} \right)\\[.5ex]
	& \to 1  - \Phi \left( {X\left( {\frac{1}{{B(x,t)}},t} \right)} \right) = F_0(x,t).
	\end{align*}
	In other words, if the trade is not stopped, then Theorem~\ref{TheoremCDF2} replicates the CDF for~$g(t)$ without stop, which is obtained in Lemma~\ref{lemma: CDF for g(t) without stop order}.  
	
	\subsection{Illustrative Example for \texorpdfstring{$K>1$}{K>1}} 
	Similar to previous Example~\ref{subsec: Example for K=1}, to illustrate the theory, we consider the following simple example: Let~$K=2$ and take~$S_0 =u_0 =t=\sigma=1$, and $\mu=S_*=1/2$. 
	Figure~\ref{fig:cdfboldkbigger1montecarlo} reveals the associated CDF plots for the trading profit or loss $g(t)$. 
	The two solid lines are for the theoretical CDFs $F_0(x,t)$ and $F(x,t)$, and dotted-line and dashed line indicate the corresponding CDF plots which are generated via Monte-Carlo simulations.
	The worst case of trading profit or loss for the trade without stop is $-u_0/K = -1/2$, but for the trade with stop, the worst case becomes of trading profit or loss becomes
	$
	g_*(t) =  -0.3823.
	$
	Furthermore, consistent with the intuition, Figure~\ref{fig:cdfboldkbigger1montecarlo} also shows that the CDF $F(x,t)$ coincides with $F_0(x,t)$ for sufficiently large $x$. 
	Finally, Figure~\ref{fig:cdfboldkbigger1montecarlo} indicates that the theoretical CDF matches perfectly with that generated via Monte-Carlo simulations.
	
	
	\begin{figure}
		\centering
		\includegraphics[scale=0.6]{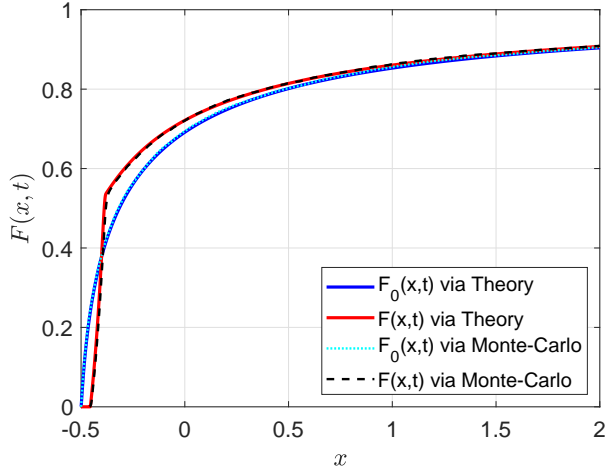}
		\caption{CDF plots for the bold trade ($K>1$) with and without stop-loss order.}
		\label{fig:cdfboldkbigger1montecarlo}
	\end{figure}

	\vspace{5mm}
	\section{Timid Investment
		(\texorpdfstring{$0<K<1$}{0<K<1})}\label{SEC: CDF for Cash-Financing}
	As mentioned in Sections~\ref{SEC: Problem Formulation} and \ref{Sec: Preliminary Notations}, the case where $0<K<1$ indeed closely related to the so-called {\it cash-financing} condition in finance literature; e.g., see \cite{Barmish_Primbs_2016,hsieh2019positive}. We also note that this topic is revisited in a greater detail in Section~\ref{SEC:Some Technical Results}. Having said this, we are now ready to state our next theorem which characterizes the CDF for $g(t)$ when $0<K<1.$

	\begin{theorem}\label{TheoremCDF3}
		For $t > 0$, consider
		an affine feedback controller with feedback gain $0<K<1$ and a stop price~$S_*< S_0$.  Then, the CDF for the trading profit or loss~$F(x,t)$  is described as follows:  For 
		$x \leq  \frac{u_0}{K}\left[ {{{\left( \frac{S_*}{S_0} \right)}^{K}} - 1} \right],$
		we have $F(x,t ) \equiv 0$, for $\frac{u_0}{K}\left[ {{{\left( \frac{S_*}{{{S}_{{0}}}} \right)}^{K}} - 1} \right] < x < g_*(t)$, we have 
		$
		F(x,t)=P(t^*\leq A(x)),
		$
		and, for  $x \ge g_*(t),$ we get
		\begin{align*}
		F(x,t) 	&=	  1  - \Phi \left( {X\left( {\frac{1}{{B(x,t)}},t} \right)} \right)   + {Z_*}\Phi \left( {X\left( \left( {\frac{{{S_*}}}{{{S_0}}}} \right)^2\frac{1}{{B(x,t)}},t \right)} \right)  .
		\end{align*}
	\end{theorem}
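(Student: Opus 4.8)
The plan is to mirror the proof of Theorem~\ref{TheoremCDF2}, exploiting the one structural feature that genuinely changes when $0<K<1$: the exponent coefficient $K-K^2=K(1-K)$ is now \emph{positive} rather than negative. As before, I would begin from the law of total probability, writing $F(x,t)=P(g(t)\le x,\,t^*\le t)+P(g(t)\le x,\,t^*>t)$ and using the identifications $\{m(t)\le S_*\}=\{t^*\le t\}$ and $\{m(t)>S_*\}=\{t^*>t\}$ for the running minimum $m(t):=\min_{u\in[0,t]}S(u)$. The two summands are then handled separately, in exact parallel with Lemmas~\ref{lemma: Prob of g and stopped for K bigger than 1} and~\ref{lemma: Prob of g and not yet stopped for K bigger than 1}.

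For the stopped branch, on $\{t^*\le t\}$ Lemma~\ref{lemma: g_t for stopped GBM} gives $g(t)=g(t^*)=\frac{u_0}{K}[(S_*/S_0)^K e^{\frac12\sigma^2K(1-K)t^*}-1]$. Solving $g(t^*)\le x$ for $t^*$ and taking logarithms, the positivity of $K(1-K)$ now \emph{preserves} the inequality direction and yields $t^*\le A(x)$, in contrast to the reversed $t^*\ge A(x)$ found for $K>1$. Hence $P(g(t)\le x,\,t^*\le t)=P(t^*\le A(x),\,t^*\le t)=P(t^*\le\min\{A(x),t\})$. I would then read off the thresholds from the definitions: $A(x)\le 0\iff x\le\frac{u_0}{K}[(S_*/S_0)^K-1]$ forces this probability to vanish, while $A(x)<t\iff x<g_*(t)$ and $A(x)\ge t\iff x\ge g_*(t)$. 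Consequently the stopped branch equals $0$ on the lowest region, $P(t^*\le A(x))$ on the middle region, and $P(t^*\le t)=1-\Omega(t)$ on the top region, with $\Omega(t)$ supplied by equation~(\ref{eq: CDF for stopping time}).

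For the not-stopped branch the computation is \emph{identical} to Lemma~\ref{lemma: Prob of g and not yet stopped for K bigger than 1}: on $\{t^*>t\}$ the condition $g(t)\le x$ reduces to $S(t)\le S_0B(x,t)$ (the definition of $B(x,t)$ absorbs the sign of the exponent), and since GBM together with its stopping time does not depend on $K$, the law-of-total-probability argument combined with equations~(\ref{eq: CDF for stopping time}) and~(\ref{eq: joint prob for St and tstar}) gives $\Omega(t)-\Theta(x,t)$ for $x>g_*(t)$ and $0$ for $x\le g_*(t)$; the latter because $x\le g_*(t)$ forces $S_0B(x,t)\le S_*$, rendering the event $\{S(t)\le S_0B(x,t),\,t^*>t\}$ empty. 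Summing the two branches region by region then produces the three cases in the statement: $F\equiv 0$ for $x\le\frac{u_0}{K}[(S_*/S_0)^K-1]$; $F(x,t)=P(t^*\le A(x))$ on the middle region, where the not-stopped branch vanishes; and $1-\Theta(x,t)=1-\Phi(X(1/B(x,t),t))+Z_*\Phi(X((S_*/S_0)^2/B(x,t),t))$ for $x\ge g_*(t)$, where the two $\Omega(t)$ contributions cancel.

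The main obstacle, and really the only place that demands care, is the bookkeeping of thresholds. Because $K(1-K)>0$, one checks that $g_*(t)\ge\frac{u_0}{K}[(S_*/S_0)^K-1]$, the \emph{reverse} of the ordering in Theorem~\ref{TheoremCDF2}; this is precisely why the roles of the two breakpoints, and hence the shape of the three-piece CDF, appear swapped relative to the bold-investment case. I would therefore verify explicitly that $\min\{A(x),t\}=A(x)$ on the middle region and $=t$ on the top region, and confirm continuity at $x=g_*(t)$, where $B(g_*(t),t)=S_*/S_0$ gives $\Theta(g_*(t),t)=\Omega(t)$ so that the middle and top pieces agree. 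Everything else is a direct transcription of the $K>1$ argument.
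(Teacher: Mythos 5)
Your proposal is correct and follows essentially the same route as the paper: the paper's own proof proceeds via Lemmas~\ref{lemma: prob in cash-financing 1 } and~\ref{lemma: prob in cash-financing 2 }, which are exactly your stopped and not-stopped branches (with the preserved inequality $t^*\le A(x)$ due to $K(1-K)>0$, the flipped threshold ordering, and the $\Omega(t)$ cancellation on the top region), combined by the law of total probability with $\{m(t)\le S_*\}=\{t^*\le t\}$. Your explicit continuity check at $x=g_*(t)$ via $B(g_*(t),t)=S_*/S_0$ is a small addition the paper does not spell out, but the argument is otherwise the same.
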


	Similarly, to prove Theorem~\ref{TheoremCDF3}, we need some aids from two preliminary lemmas; e.g., Lemmas~\ref{lemma: prob in cash-financing 1 } and \ref{lemma: prob in cash-financing 2 }. 
	Similar to Section~\ref{SEC: CDF for Leverage Case}, let $t^* = \inf \{t \geq 0: S(t) \le S_* \}$. We now give proofs for the two lemmas. After that, as seen later in the section, the proof of Theorem~\ref{TheoremCDF3} follows.

	\begin{lemma} \label{lemma: prob in cash-financing 1 }
		For $t > 0$, consider an affine feedback controller with gain $0<K<1$ and a stop price~$S_*< S_0$. Then 
		\[
		P\left( g(t) \leq x, t^* \leq t \right) = \begin{cases}
		P(t^*\leq t), &\text{ if } x \geq g_*(t);\\
		P(t^* \leq A(x)), &\text{ if }  \frac{u_0}{K}\left[ {{{\left( {\frac{{{S_*}}}{{{{S}_{{0}}}}}} \right)}^{K}} - 1} \right] < x < g_*(t);\\
		0, & \text{ if } x \leq \frac{u_0}{K}\left[ {{{\left( {\frac{{{S_*}}}{{{{S}_{{0}}}}}} \right)}^{K}} - 1} \right]
		\end{cases}
		\]
		where $P(t^*\leq z)$ for $z>0$ is given in equation~(\ref{eq: CDF for stopping time}).
	\end{lemma}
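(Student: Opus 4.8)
The plan is to mirror the argument of Lemma~\ref{lemma: Prob of g and stopped for K bigger than 1}, while tracking carefully the sign change that arises because $K-K^2>0$ when $0<K<1$ (whereas $K-K^2<0$ when $K>1$); this single reversal is what relocates all of the thresholds. First I would restrict attention to the event $\{t^*\le t\}$. On this event Lemma~\ref{lemma: g_t for stopped GBM} gives $g(t)=g(t^*)=\frac{u_0}{K}\left[\left(\frac{S_*}{S_0}\right)^K e^{\frac{1}{2}\sigma^2(K-K^2)t^*}-1\right]$, so that $P(g(t)\le x,\,t^*\le t)=P(g(t^*)\le x,\,t^*\le t)$.

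The next step is to rewrite $g(t^*)\le x$ as a one-sided constraint on $t^*$. Isolating the exponential and taking logarithms turns $g(t^*)\le x$ into $\frac{1}{2}\sigma^2(K-K^2)\,t^*\le \log\left[\left(\frac{S_0}{S_*}\right)^K\left(\frac{K}{u_0}x+1\right)\right]$. Here is where the timid case diverges from the bold case: since $0<K<1$ the coefficient $\frac{1}{2}\sigma^2(K-K^2)$ is strictly positive, so dividing through \emph{preserves} the inequality and yields $t^*\le A(x)$, with $A(x)$ as in equation~(\ref{eq: A_x}). In the $K>1$ case the same division reverses the inequality and produces $t^*\ge A(x)$, which is precisely why the two lemmas differ. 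Consequently $P(g(t)\le x,\,t^*\le t)=P\left(t^*\le A(x),\,t^*\le t\right)=P\left(t^*\le \min\{A(x),t\}\right)$.

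It then remains to sort out the three regimes by comparing $A(x)$ against $0$ and against $t$. Because $A$ is strictly increasing in $x$ on its domain (the outer constant is positive and the logarithm's argument is increasing), $A(x)=0$ exactly when $x=\frac{u_0}{K}\left[\left(\frac{S_*}{S_0}\right)^K-1\right]$ and $A(x)=t$ exactly when $x=g_*(t)$; the latter is a direct algebraic inversion of equation~(\ref{eq:g_star_t}). Thus for $x\le \frac{u_0}{K}\left[\left(\frac{S_*}{S_0}\right)^K-1\right]$ one has $A(x)\le 0$, and since $t^*>0$ almost surely (as $S_0>S_*$) the probability collapses to $0$ (for $x$ below $-u_0/K$ the event is outright empty); for $\frac{u_0}{K}\left[\left(\frac{S_*}{S_0}\right)^K-1\right]<x<g_*(t)$ one has $0<A(x)<t$, so $\min\{A(x),t\}=A(x)$ and the probability is $P(t^*\le A(x))$; and for $x\ge g_*(t)$ one has $A(x)\ge t$, so $\min\{A(x),t\}=t$ and the probability is $P(t^*\le t)$. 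Each stopping-time probability is then read off from the closed form in equation~(\ref{eq: CDF for stopping time}).

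The main obstacle I anticipate is bookkeeping rather than conceptual: keeping the inequality direction correct through the division by $K-K^2$, and confirming that the two boundary values of $x$, namely $\frac{u_0}{K}\left[\left(\frac{S_*}{S_0}\right)^K-1\right]$ and $g_*(t)$, are exactly the preimages under $A$ of $0$ and $t$. Once the sign of $K-K^2$ and the monotonicity of $A$ are pinned down, the case split and the substitution of equation~(\ref{eq: CDF for stopping time}) are routine.
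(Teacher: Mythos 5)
Your proposal is correct and follows essentially the same route as the paper's proof: restrict to the event $\{t^*\le t\}$, use the stopped-gain formula to convert $g(t^*)\le x$ into $t^*\le A(x)$ (the inequality direction being preserved because $K-K^2>0$ for $0<K<1$), and then split cases by comparing $A(x)$ with $0$ and with $t$, invoking equation~(\ref{eq: CDF for stopping time}) at the end. Your $\min\{A(x),t\}$ formulation together with the monotonicity of $A$ is merely a cleaner bookkeeping of the paper's identical case analysis.
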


	\begin{proof}
		We follow a similar approach in the proof of Lemma~\ref{lemma: Prob of g and stopped for K bigger than 1}. We begin by noting that for $t^* \le t$, Lemma~\ref{lemma: g_t for stopped GBM} tells us that the trading profit or loss satisfies
		\begin{align}\label{eq: g_tstar formula}
		g(t) = g(t^*) =\frac{u_0}{K}\left[ {{{\left( {\frac{S_*}{S_0}} \right)}^K}{e^{\frac{1}{2}{\sigma ^2}(K - {K^2}){t^*}}} - 1} \right].
		\end{align}
		Thus, we  may write $
		P\left( g(t) \le x, t^* \le t \right) = P\left( g\left( {{t^*}} \right) \le x , t^* \le t \right).
		$
		Since $K \in (0,1)$, it is readily verified that
		\begin{align}\label{ineq: worst case}
		g_*(t) \ge g(t^*) \ge \frac{u_0}{K}\left[ \left( \frac{S_*}{S_0} \right)^K -1\right].
		\end{align}
		The inequality~(\ref{ineq: worst case}) above tells us that  $\frac{u_0}{K}\left[ {{\left( {\frac{{{S_*}}}{{{S_{0}}}}} \right)}^K} - 1 \right]$ is the worst case of trading profit or loss. Hence, this implies that for 
		$
		x < \frac{u_0}{K}\left[ {{{\left( \frac{S_*}{S_0} \right)}^K} - 1} \right],
		$ the joint probability $$P\left( {g(t) \le x, {t^*} \le t} \right)  = 0.$$
		On the other hand, for 
		$x \ge \frac{u_0}{K}\left[ {{{\left( \frac{S_*}{S_0} \right)}^{K}} - 1} \right],$
		since $K\in (0,1)$, using the equation~(\ref{eq: g_tstar formula}),  the joint CDF formula becomes
		\begin{align*}
		P\left( g(t) \le x , t^* \le t \right) 
		& = P(g(t^*)\leq x, t^* \leq t) \\[.5ex] 
		&= P\left( { {\frac{{{u_0}}}{K}\left[ {{{\left( {\frac{{{S_*}}}{{{S_0}}}} \right)}^K}{e^{\frac{1}{2}{\sigma ^2}(K - {K^2}){t^*}}} - 1} \right] \le x} , t^* \le t} \right)\\[.5ex]
		&  =P\left( t^* \le \frac{2}{{\sigma ^2}K(1 - K)}\log \left[ {{{\left( {\frac{S_0}{S_*}} \right)}^K}\left( {\frac{K}{{{u_0}}}x + 1} \right)} \right],{t^*} \le t \right).
		\end{align*}
		Using the definition of $A(x)$ given in equation~(\ref{eq: A_x}), we obtain
		\begin{align} \label{eq: Prob eq }
		P\left( g(t) \le x , t^* \le t \right) 
		&  = P\left( {{t^*} \le A\left( x \right),  t^* \le t} \right). 
		\end{align}
		Note that within the range of $x \ge \frac{u_0}{K}\left[ {{{\left( \frac{S_*}{S_0} \right)}^{K}} - 1} \right]$, the function $A(x) \geq 0$; and $A(x)=0$ if $x = \frac{u_0}{K}\left[ \left( \frac{S_*}{S_0} \right)^K - 1 \right]$. In this case, $P(t^*\leq A(x), t^*\leq t) =P(t^*\leq 0) = 0.$ 
		To complete the proof, below we proceed our analysis for two remaining cases: 
		
		\textit{Case 1:} For $x \geq g_*(t)$, we claim that the following event identity holds:
		\begin{align}\label{eq: events}
		\{t^* \leq A(x), t^* \leq t\} = \{t^* \leq t\}.
		\end{align}
		To see this, recalling that 
		$$ 
		g_*(t) = \frac{u_0}{K}\left( {{{\left( {\frac{{{S_*}}}{{{S_0}}}} \right)}^K}{e^{\frac{1}{2}{\sigma ^2}(K - {K^2})t}} - 1} \right),
		$$ 
		hence, $x\geq g_*(t)$ is equivalent to 
		$
		t \le 
		A(x)
		$
		and equation~(\ref{eq: events}) holds. Thus, equation~(\ref{eq: Prob eq }) becomes
		\begin{align*}
		P\left( g(t) \le x , t^* \le t \right) &= P\left( t^* \le A( x ) , t^* \le t \right) \\[.5ex]
		& = P(t^* \le t).
		\end{align*}
		
		\textit{Case 2:} For $ \frac{u_0}{K}\left[ {{{\left( {\frac{{{S_*}}}{{{{S}_{{0}}}}}} \right)}^{K}} - 1} \right] < x < g_*(t)$,  via a straightforward calculation, it follows that $t> A(x) > 0$. 
		Thus, equation~(\ref{eq: Prob eq }) becomes
		\begin{align*}
		P\left( {g(t) \le x , {t^*} \le t} \right) &= P\left( {{t^*} \le A\left( x \right) , t^* \le t} \right) \\[.5ex]
		& = P(t^* \le A(x)).
		\end{align*}
		Therefore, we obtain
		\[
		P\left( g(t) \leq x, t^* \leq t \right) = \begin{cases}
		P(t^*\leq t), &\text{ if } x \geq g_*(t);\\
		P(t^* \leq A(x)), &\text{ if }  \frac{u_0}{K}\left[ {{{\left( {\frac{{{S_*}}}{{{{S}_{{0}}}}}} \right)}^{K}} - 1} \right] < x < g_*(t);\\
		0, & \text{ if } x \leq \frac{u_0}{K}\left[ {{{\left( {\frac{{{S_*}}}{{{{S}_{{0}}}}}} \right)}^{K}} - 1} \right]
		\end{cases}
		\]
		and the proof is complete.
	\end{proof}

	\begin{lemma}\label{lemma: prob in cash-financing 2 }
		For $t > 0$, consider an affine feedback controller with gain $0<K<1$ and a stop price~$S_*< S_0$. Then for $x < g_*(t)$
		, then $P(g(t)\leq x, t^* > t) = 0$, and for $x \geq g_*(t)$, we have
		\begin{align*}
		P\left( {g(t) \le x, t^* > t} \right)
		&   =  1 - P({t^*} \le t)  - \Phi \left( {X\left( {\frac{1}{{B(x,t)}},t} \right)} \right)  \\[.5ex]
		& \hspace{10mm} + {Z_*}\Phi \left( {X\left( {{{\left( {\frac{{{S_*}}}{{{S_0}}}} \right)}^2}\frac{1}{{B(x,t)}},t} \right)} \right)
		\end{align*}
		where $P(t^*\leq t)$ is given in equation~(\ref{eq: CDF for stopping time}).
	\end{lemma}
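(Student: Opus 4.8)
The plan is to mirror the argument used for the not-yet-stopped case when $K>1$, namely Lemma~\ref{lemma: Prob of g and not yet stopped for K bigger than 1}, since conditioning on the event $\{t^*>t\}$ behaves the same way whether $K$ lies above or below one. First I would invoke Lemma~\ref{lemma: g_t for stopped GBM}: on $\{t^*>t\}$ the price has not reached the stop, so $S(t)>S_*$ and
\[
g(t) = \frac{u_0}{K}\left[ \left(\frac{S(t)}{S_0}\right)^K e^{\frac{1}{2}\sigma^2(K-K^2)t} - 1 \right].
\]
Because $s\mapsto s^K$ is increasing for $K>0$, this forces $g(t)>g_*(t)$ on $\{t^*>t\}$. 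Hence the event $\{g(t)\le x,\, t^*>t\}$ is empty for $x<g_*(t)$, which establishes the first case, $P(g(t)\le x,\,t^*>t)=0$.

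For $x\ge g_*(t)$, the crucial reduction is that $g(t)\le x$ is equivalent to $S(t)\le S_0 B(x,t)$, obtained by solving the displayed inequality for $S(t)$ and taking the order-preserving $1/K$-th root, exactly matching the definition of $B(x,t)$ in equation~(\ref{eq: B_x_t}). The only requirement is $\frac{K}{u_0}x+1>0$, which holds throughout the stated range; this is the one spot where the sign $K-K^2=K(1-K)>0$ for $0<K<1$ enters, though it does not reverse the final inequality since $K>0$.

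I would then convert the joint event into the preliminary quantities. Splitting $P(S(t)\le S_0B(x,t))$ on $\{t^*>t\}$ and $\{t^*\le t\}$ via the law of total probability, and using the elementary identity $P(X\ge x,Y\ge y)+P(X\le x)+P(Y\le y)=1+P(X\le x,Y\le y)$ invoked in Lemma~\ref{lemma: Prob of g and not yet stopped for K bigger than 1}, I arrive at
\[
P(g(t)\le x,\, t^*>t) = 1 - P(t^*\le t) - P\!\left(S(t)\ge S_0 B(x,t),\, t^*\ge t\right).
\]
Substituting equation~(\ref{eq: joint prob for St and tstar}) with its $x$ replaced by $S_0B(x,t)$, and simplifying $S_0/(S_0B(x,t))=1/B(x,t)$ and $S_*^2/(S_0^2 B(x,t))=(S_*/S_0)^2/B(x,t)$, produces the claimed closed form.

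The algebra is routine once the reduction is in hand; the only genuinely new feature relative to the $K>1$ computation is the worst-case floor. I expect the main obstacle to be explaining cleanly why the zero-probability region here is $\{x<g_*(t)\}$ rather than the larger region $\{x<\frac{u_0}{K}[(S_*/S_0)^K-1]\}$ governing the companion Lemma~\ref{lemma: prob in cash-financing 1 }. The resolution is the monotonicity above: conditioned on not having stopped, $S(t)$ stays strictly above $S_*$ so the attainable gains exceed $g_*(t)$, whereas on the stopped event the factor $e^{\frac{1}{2}\sigma^2 K(1-K)t^*}$ can shrink to its value at $t^*=0$, lowering the floor to $\frac{u_0}{K}[(S_*/S_0)^K-1]$.
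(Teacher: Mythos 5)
Your proposal is correct and follows essentially the same route as the paper's proof: the same reduction of $g(t)\le x$ to $S(t)\le S_0 B(x,t)$ on $\{t^*>t\}$, the same law-of-total-probability decomposition combined with the identity $P(X\ge x,Y\ge y)+P(X\le x)+P(Y\le y)=1+P(X\le x,Y\le y)$, and the same substitution of equations~(\ref{eq: CDF for stopping time}) and~(\ref{eq: joint prob for St and tstar}). The only cosmetic difference is that the paper splits the zero-probability region $x<g_*(t)$ into two sub-ranges before concluding, whereas you handle it in one step via the monotonicity $S(t)>S_*\Rightarrow g(t)>g_*(t)$, which is equally valid.
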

	
	\begin{proof}
		For $t^* > t$, with the aid of Lemma~\ref{lemma: g_t for stopped GBM}, the  trading profit or loss $g(t)$ satisfies
		\begin{align*}
		g(t)  = \frac{{{u_0}}}{K}\left[ {{{\left( {\frac{{S(t)}}{{{S_0}}}} \right)}^K}{e^{\frac{1}{2}{\sigma ^2}(K - {K^2})t}}\; - 1} \right].
		\end{align*}
		Therefore,  the corresponding probability $P ( g(t) \le x , t^* > t )$ is
		\begin{align*}
		P\left( {g(t) \le x , {t^*} > t} \right) = P\left( { {\frac{{{u_0}}}{K}\left[ {{{\left( {\frac{{S(t)}}{{{S_0}}}} \right)}^K}	{e^{\frac{1}{2}{\sigma ^2}(K - {K^2})t}} - 1} \right] \le x} , {t^*} > t} \right).
		\end{align*}
		For $x < \frac{{{u_0}}}{K}\left[ {{{\left( {\frac{{{S_*}}}{{{{{S}}_{{0}}}}}} \right)}^{{K}}} - 1} \right],$
		we have 
		$$
		P\left( {g(t) \le x, {t^*} > t} \right) =0
		$$ and for
		$
		\frac{u_0}{K}\left[ {{{\left( {\frac{{{S_*}}}{{{{{S}}_{{0}}}}}} \right)}^{{K}}} - 1} \right] \leq x < g_*(t)
		$ 
		we have
		$
		P(g(t)\leq x, t^* > t)= 0
		$
		since $g(t) \geq g_*(t)$ for all $t<t^*.$
		Finally, for $x \geq g_*(t)$,
		we have
		$$
		P\left( {g(t) \le x, t^* > t} \right) = P\left( {S(t) \le S_0 B(x,t), t^*  > t} \right).
		$$ 
		Now, since 
		\begin{align*}
		P(S\left( t \right) \le S_0 B(x,t)) &= P(S\left( t \right) \le S_0 B(x,t),{t^*} > t) + P(S\left( t \right) \le S_0 B(x,t),{t^*} \le t),
		\end{align*}
		we can rewrite the joint CDF 
		\begin{align*}
		P(S\left( t \right) \le S_0 B(x,t),{t^*} > t) &= P(S\left( t \right) \le S_0 B(x,t)) - P(S\left( t \right) \le S_0 B(x,t),{t^*} \le t).
		\end{align*}
		Note that the second term of the equality above can be obtained by using the following facts:
		Given any two continuous random variables $(X,Y)$, we have the following identity
		$
		P(X \ge x,Y \ge y) + P(X \le x) + P(Y \le y) 
		= 1 + P(X \le x,Y \le y).
		$
		Thus, using the fact above, we write the joint CDF as follows:
		\begin{align*}
		P(S\left( t \right) \le S_0 B(x,t),{t^*} >t) 
		& = P(S\left( t \right) \le S_0 B(x,t))   - P(S\left( t \right) \le S_0 B(x,t),{t^*} \le t)\\[.5ex]
		&   = 1 - P({t^*} \le t) - P(S\left( t \right) \ge S_0 B(x,t),{t^*} \ge t) \\[.5ex]
		&   =  1 - P({t^*} \le t)  - \Phi \left( {X\left( {\frac{1}{{B(x,t)}},t} \right)} \right)  \\[.5ex]
		& \hspace{10mm} + {Z_*}\Phi \left( {X\left( {{{\left( {\frac{{{S_*}}}{{{S_0}}}} \right)}^2}\frac{1}{{B(x,t)}},t} \right)} \right)
		\end{align*}
		and the proof is complete.
	\end{proof}

	We are now ready to give our proof for Theorem~\ref{TheoremCDF3}.
	
	\begin{proof}[Proof of Theorem~\ref{TheoremCDF3}]
		The idea of the proof is similar to the case for $K>1$. However, for the sake of completeness, we provide a full proof here.
		Again, we begin by defining the minimum of GBM over~$[0,t]$ as
		$
		m(t) := \min_{u \in [0,t]}S(u).
		$
		To compute the CDF for the trading profit or loss~$F(x,t)$, we apply law of total probability and write 
		$
		F(x,t) 	 = P(g(t) \le x, m(t) \le S_*) + P(g(t) \le x, m(t) > S_* ). 
		$
		Note that the events
		$
		\{ m(t) \le S_*\} = \{ t^* \le t \}$
		and
		$
		\{ m(t) > S_*\} = \{t^* > t \}.
		$
		Thus, we have 
		$
		F(x,t) 	 = P(g(t) \le x, t^* \le t) + P(g(t) \le x, t^* >t ) .
		$
		According to Lemmas~\ref{lemma: prob in cash-financing 1 } and \ref{lemma: prob in cash-financing 2 }, the CDF for the cumulative trading profit or loss can be described as follows:
		For $x <  \frac{{{u_0}}}{K}\left[ {{{\left( {\frac{S_*}{S_0}} \right)}^{K}} - 1} \right],$
		we have $F(x,t )=0$, and, for $ \frac{{{u_0}}}{K}\left[ {{{\left( {\frac{{{S_*}}}{S_0}} \right)}^K} - 1} \right] \leq x < g_*(t)$,
		we have
		\begin{align*}
		F(x,t) &= 1 - P({t^*} \le t)  - \Phi \left( {X\left( {\frac{1}{{B(x,t)}},t} \right)} \right)   + {Z_*}\Phi \left( {X\left( {{{\left( {\frac{{{S_*}}}{{{S_0}}}} \right)}^2}\frac{1}{{B(x,t)}},t} \right)} \right).
		\end{align*}
		Finally, for $x \ge g_*(t),$ we have
		\begin{align*}
		F(x,t) 	&= 1  - \Phi \left( {X\left( {\frac{1}{{B(x,t)}},t} \right)} \right)   + {Z_*}\Phi \left( {X\left( {{{\left( {\frac{{{S_*}}}{{{S_0}}}} \right)}^2}\frac{1}{{B(x,t)}},t} \right)} \right)
		\end{align*}
		and the proof is complete.
	\end{proof}
	
	\subsection{Remark}
	Similar to Remarks~\ref{subsec: Remarks for K=1} and \ref{subsec: Remarks for K>1}, let $S_* \to 0$, then we have $F(x,t) \to F_0(x,t)$ for $x \ge -u_0/K$.

	\subsection{Illustrative Example for \texorpdfstring{$0<K<1$}{0<K<1}} 
	Similar to previous sections, to illustrate the theory, consider the following simple example: Let $K = 1/2$ and take parameters $S_0 =u_0 = t=\sigma=1$, and~$\mu=S_*=1/2$. 
	Figure~\ref{fig:cdftimidk01montecarlo} reveals the associated CDF plots for the trading profit or loss $g(t)$ where the two solid lines for the theoretical CDFs $F_0(x,t)$ and $F(x,t)$ and dotted-line and dashed line indicate the simulation counterpart which are generated via Monte-Carlo simulations.
	The worst case of trading profit or loss for the trade without stop is $ -u_0/K = -2$, but for the trade with stop, the worst case  is
	$
	\frac{u_0}{K}\left[ \left( \frac{S_*}{S_0} \right)^K - 1 \right] =  -0.5858.
	$
	Furthermore, Figure~\ref{fig:cdftimidk01montecarlo} shows that the CDF $F(x,t) $ coincides to $F_0(x,t)$ for sufficiently large $x$.
	Finally, the figure also indicates that the theoretical CDF plots match perfectly with those which are generated via Monte-Carlo simulations.
	

	\begin{figure}
		\centering
		\includegraphics[scale=0.6]{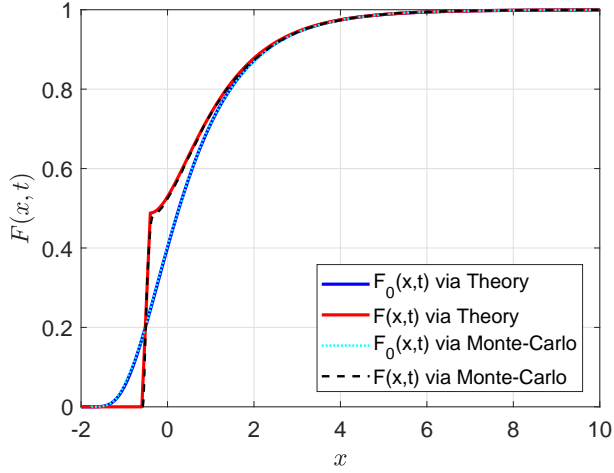}
		\caption{CDF plots for the timid trade ($0<K<1$) with and without stop-loss order.}
		\label{fig:cdftimidk01montecarlo}
	\end{figure}

	\vspace{5mm}
	\section{Some Technical Results}
	\label{SEC:Some Technical Results}
	In this section, we provide some additional technical results regarding the properties of affine feedback control that used in this paper. 
	Below, we first show that if one chooses $u_0 \leq KV_0$, then the so-called \textit{all-time survivability} is assured; i.e., $V(t) \geq 0$ for all $t$ with probability one. Said another way, ``no bankruptcy" is guaranteed; see also~\cite{hsieh2019positive}.
	
	\begin{lemma}[Survivability]\label{lemma: survivability}
		If $u_0 \leq KV_0$, then the trader's account value satisfies $
		V(t) \geq 0
		$ for all $t \geq 0$ with probability~one. 
	\end{lemma}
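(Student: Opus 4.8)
The plan is to reduce the all-time survivability claim to a uniform lower bound on the cumulative profit or loss $g(t)$. Since the account value satisfies $V(t) = V_0 + g(t)$, establishing $V(t) \geq 0$ for all $t \geq 0$ is equivalent to showing $g(t) \geq -V_0$ for all $t \geq 0$ with probability one. Thus the whole argument hinges on producing a pathwise lower bound for $g(t)$ and then matching it against the hypothesis $u_0 \leq K V_0$.

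First I would invoke the closed-form expression for $g(t)$ furnished by Lemma~\ref{lemma: g_t for stopped GBM}, namely
$$
g(t) = \frac{u_0}{K}\left( \left( \frac{\widetilde{S}(t)}{S_0} \right)^K e^{\frac{1}{2}\sigma^2(K-K^2)t} - 1 \right),
$$
which is valid for every $K>0$ and already incorporates the stop-loss mechanism through the stopped price $\widetilde{S}(t)$. The key observation is that the bracketed term is strictly positive: the geometric Brownian motion stays strictly positive with probability one, so $\widetilde{S}(t) \geq S_* > 0$, and the exponential factor is positive as well. Consequently $\left(\widetilde{S}(t)/S_0\right)^K e^{\frac{1}{2}\sigma^2(K-K^2)t} > 0$, and since $u_0, K > 0$ we obtain the uniform bound
$$
g(t) > -\frac{u_0}{K}
$$
for all $t \geq 0$ with probability one --- the same bound already noted in the remark following Lemma~\ref{lemma: Barmish and Primbs g(t)}, now seen to persist under the stop order.

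Combining the two steps completes the argument: under the hypothesis $u_0 \leq K V_0$, i.e.\ $u_0/K \leq V_0$, the uniform bound gives $g(t) > -u_0/K \geq -V_0$, whence $V(t) = V_0 + g(t) > V_0 - u_0/K \geq 0$ for all $t \geq 0$ with probability one. I do not anticipate a serious obstacle here; the only point requiring a little care is the phrase \emph{with probability one}, which enters precisely because it is the almost-sure positivity of the GBM sample paths ($S(t) > 0$, hence $\widetilde{S}(t) \geq S_* > 0$) that makes the bracketed quantity positive pathwise. Everything else is a deterministic inequality, and the bound holds uniformly across all three regimes $K=1$, $K>1$, and $0<K<1$ without modification.
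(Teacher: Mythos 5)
Your proof is correct and is essentially the paper's own argument: both hinge on the pathwise bound $g(t) > -u_0/K$, extracted from the closed-form expression of Lemma~\ref{lemma: g_t for stopped GBM} via positivity of $\left(\widetilde{S}(t)/S_0\right)^K e^{\frac{1}{2}\sigma^2(K-K^2)t}$, followed by $V(t) = V_0 + g(t) \geq V_0 - u_0/K \geq 0$ under the hypothesis $u_0 \leq KV_0$. The only difference is organizational: the paper splits into the cases $t < t^*$ (where it bounds $g(t) \geq g_*(t)$) and $t \geq t^*$ (where it uses $g(t) = g(t^*) \geq -u_0/K$), whereas you absorb both cases into a single step through the stopped price $\widetilde{S}(t)$, a harmless streamlining of the same idea.
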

	
	\begin{proof}
		Let $t\geq 0$ be given. Along any GBM price path, we first note that the account value
		$V(t) = g(t)+ V_0.$ In addition, if $t < t_*$, it is readily verified that $	g(t) \geq g_*(t)$.
		Thus, using $u_0 \leq KV_0$, it follows that $V_0 \geq u_0/K$ and
		\begin{align*} \label{ineq: 1}
		V(t) & \geq V_0 +g_*(t) \\
		& \geq \frac{u_0}{K}\left(  \left( \frac{S_*}{S_0} \right)^K e^{\frac{1}{2}{\sigma ^2}(K - K^2)t}  \right) \\
		&> 0.
		\end{align*}
		On the other hand,	if $t \geq t^*$, then
		$
		g(t) = g(t^*) \geq -u_0/K.
		$
		Thus, $V(t) \geq V_0 - u_0/K \geq 0$ since $u_0 \leq KV_0.$
		Hence, the proof is complete.
	\end{proof}
	
	\subsection{Remarks} 
	The survivability issue is indeed closely related to the existing positive system theory. We refer the reader to our prior work in \cite{hsieh2019impact} and \cite{hsieh2019positive} for discussions on this topic.  
	It is also readily verified that 
	$
	|u(t)| \leq u_0 + KV_0 +KV(t)
	$
	for all $t>0$ with probability one. 
	To see this, using the facts that $u(t)=u_0 + Kg(t)$ and $g(t) = V(t)-V_0$, and Lemma~\ref{lemma: survivability}, we 
	have
	\begin{align*}
	|u(t)| 
	& \leq |u_0-KV_0| +KV(t)\\
	& \leq u_0 + KV_0 +KV(t)
	\end{align*}
	where the last inequalities above hold by using the triangle inequality. Moreover, with $u_0 \leq KV_0$, we obtain
	$
	|u(t)| \leq K(2V_0 +V(t)).
	$ 
	If $u_0=KV_0$, we can obtain a somewhat stronger result than Lemma~\ref{lemma: survivability}.
	
	\begin{corollary}\label{cor: control inputs} Let $K > 0$.\ \\
		(a) $u_0 = KV_0$ if and only if $u(t)=KV(t).$ \\
		(b) If $u_0=KV_0$, then $V(t) \geq 0$ and $u(t)\geq 0$ for all $t \geq 0$ with probability one.
	\end{corollary}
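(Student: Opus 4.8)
The plan is to exploit the two defining relations $u(t) = u_0 + Kg(t)$ and $V(t) = V_0 + g(t)$ directly, together with Lemma~\ref{lemma: survivability} for the sign conclusions in part (b). Both parts turn out to be short consequences of a single algebraic identity, so almost no machinery beyond the definitions is required.

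For part (a), I would form the difference $u(t) - KV(t)$ and simplify. Substituting both relations gives
\[
u(t) - KV(t) = \bigl( u_0 + Kg(t) \bigr) - K\bigl( V_0 + g(t) \bigr) = u_0 - KV_0,
\]
so the quantity $u(t) - KV(t)$ is \emph{constant in $t$} and equal to $u_0 - KV_0$. Hence $u(t) = KV(t)$ holds for every $t$ (equivalently, at the single time $t = 0$, where $g(0) = 0$) if and only if $u_0 - KV_0 = 0$, that is, $u_0 = KV_0$. This settles both implications of the equivalence simultaneously.

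For part (b), the hypothesis $u_0 = KV_0$ in particular implies $u_0 \le KV_0$, so Lemma~\ref{lemma: survivability} immediately yields $V(t) \ge 0$ for all $t \ge 0$ with probability one. For the control, I would invoke part (a): since $u_0 = KV_0$ we have $u(t) = KV(t)$, and because $K > 0$ and $V(t) \ge 0$ almost surely, it follows that $u(t) = KV(t) \ge 0$ for all $t \ge 0$ with probability one.

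There is essentially no genuine obstacle here; the statement is a direct corollary of the constancy identity in part (a) and the already-established survivability bound. The only points demanding a little care are making explicit that ``$u(t) = KV(t)$'' is to be read as an identity in $t$ (the constancy computation shows this holds for all $t$ as soon as it holds at one time, in particular at $t = 0$), and observing that the boundary case $u_0 = KV_0$ indeed falls within the hypothesis $u_0 \le KV_0$ of Lemma~\ref{lemma: survivability}.
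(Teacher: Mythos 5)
Your proposal is correct and follows essentially the same route as the paper: both parts rest on the identity $u(t) - KV(t) = u_0 - KV_0$ (the paper writes it as $u(t) = u_0 - KV_0 + KV(t)$), and part (b) combines Lemma~\ref{lemma: survivability} with part (a) exactly as the paper does. Your observation that this quantity is constant in $t$ is a slightly cleaner packaging of the same computation, but there is no substantive difference.
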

	
	\begin{proof} The proof of part (a) is straightforward. Indeed, note that 
		$$u(t)=u_0+Kg(t) = u_0-KV_0 +KV(t).$$ Hence, $u_0=KV_0$ implies that $u(t)=KV(t)$. Next, we assume $u(t) = KV(t)$. Then, by equating $u_0+Kg(t) = KV(t)$, we obtain that $u_0 = K(V(t)-g(t)) = KV_0.$
		
		To prove part~(b), assuming $u_0=KV_0$ and using Lemma~\ref{lemma: survivability}, it follows that $V(t)\geq 0$ with probability one. To see that $u(t) \geq 0$, we note that by part (a), it follows that
		$
		u(t) = KV(t) \geq 0
		$ where the last inequality holds since $K>0.$
	\end{proof}
	
	\subsection{Remark}\label{remark: cash-financing}
	Let $u_0 = KV_0$. If $K \in [0,1]$, then part (a) of Corollary~\ref{cor: control inputs} indicates that the trade is cash-financed. If $K >1$, then the trade is leveraged.

	
	%

	\begin{lemma}[Long Only Property]\label{lemma: long only property}
		If $K > 0$, then the affine feedback controller $u(t) > 0$ for all $t \in (0,t^*)$ with probability one, and $u(t)=0$ for all $t \geq t^*$.
	\end{lemma}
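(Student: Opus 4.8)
The plan is to prove the two assertions separately, since they concern disjoint time regimes and follow from quite different facts. The first claim, that $u(t) > 0$ for all $t \in (0, t^*)$ with probability one, will rely on the affine feedback form $u(t) = u_0 + K g(t)$ together with the explicit expression for $g(t)$ on the not-yet-stopped regime furnished by Lemma~\ref{lemma: g_t for stopped GBM}. The second claim, that $u(t) = 0$ for all $t \geq t^*$, is essentially definitional and will require only a one-line appeal to the problem formulation, where the control is set to zero once the trade is stopped.

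First I would address the pre-stopping regime $t \in (0, t^*)$. Here $t < t^*$, so $\widetilde{S}(t) = S(t)$, and Lemma~\ref{lemma: g_t for stopped GBM} gives
\[
g(t) = \frac{u_0}{K}\left( \left( \frac{S(t)}{S_0} \right)^K e^{\frac{1}{2}\sigma^2(K - K^2)t} - 1 \right).
\]
Substituting into $u(t) = u_0 + K g(t)$, the constant terms $u_0$ and $-u_0$ cancel and I obtain
\[
u(t) = u_0 \left( \frac{S(t)}{S_0} \right)^K e^{\frac{1}{2}\sigma^2(K - K^2)t}.
\]
Now $u_0 > 0$ by hypothesis, the exponential factor is strictly positive for every real argument, and $(S(t)/S_0)^K$ is strictly positive because the GBM price $S(t)$ is almost surely strictly positive for all $t \geq 0$ (a standard property of geometric Brownian motion, $S(t) = S_0 \exp((\mu - \tfrac12\sigma^2)t + \sigma W(t)) > 0$). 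Therefore $u(t)$ is a product of strictly positive quantities, giving $u(t) > 0$ on $(0, t^*)$ with probability one.

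For the post-stopping regime, the problem formulation already stipulates that for $t \geq t^*$ the trade is terminated and $u(t) := 0$, so the second assertion holds by definition. I do not anticipate a genuine obstacle in either part: the only point requiring mild care is observing that the positivity of $u(t)$ is \emph{independent of the sign of $K - K^2$}, so the argument covers the bold case $K > 1$, the buy-and-hold case $K = 1$, and the timid case $0 < K < 1$ uniformly, since the exponential is positive regardless. The almost-sure strict positivity of $S(t)$ is what makes the ``with probability one'' qualifier precise, and that is the single probabilistic ingredient the proof leans on.
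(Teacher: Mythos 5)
Your proposal is correct and takes essentially the same route as the paper: both parts rest on Lemma~\ref{lemma: g_t for stopped GBM} for the pre-stop regime and on the definitional stipulation $u(t):=0$ for $t \geq t^*$. The only cosmetic difference is that you substitute the exact formula for $g(t)$ to get the identity $u(t) = u_0\left(\frac{S(t)}{S_0}\right)^K e^{\frac{1}{2}\sigma^2(K-K^2)t}$ and invoke almost-sure positivity of the GBM path, whereas the paper instead uses the pathwise bound $g(t) \geq g_*(t)$ for $t < t^*$ to obtain the deterministic lower bound $u(t) \geq u_0\left(\frac{S_*}{S_0}\right)^K e^{\frac{1}{2}\sigma^2(K-K^2)t} > 0$; both arguments are valid and equally short.
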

	
	\begin{proof}
		Since $K > 0$, $u_0>0$ and $g(t) \geq g_*(t)$ for all $t < t^*$, we have 
		\begin{align*}
		u(t) 
		&\geq u_0 + K g_*(t) \\
		& = u_0 + u_0\left( {{\left( \frac{S_*}{S_0} \right)^K}{e^{\frac{1}{2}{\sigma ^2}(K - K^2)t}} - 1} \right)\\
		&=u_0\left[  {{\left( \frac{S_*}{S_0} \right)^K}{e^{\frac{1}{2}{\sigma ^2}(K - K^2)t}} } \right]\\
		&>0.
		\end{align*}
		To complete the proof, we note that by definition, for $t \geq t^*$, $u(t)=0.$
	\end{proof}
	
	Subsequently, we provide a result which characterizes a lower bound of the expected cumulative trading profit or loss.
	
	\begin{lemma}[Lower Bounds for Expected Cumulative trading profit or loss] Let $u_0 \leq KV_0$. Then
		the expected cumulative trading profit or loss is lower bounded. Specifically, for constant $c>0$, we have
		$
		\mathbb{E}[g(t)] \geq c(1-F(c-V_0,t))-V_0
		$
		where $F(z,t)=P(g(t)\leq z)$ is the CDF for the cumulative trading profit or loss.
	\end{lemma}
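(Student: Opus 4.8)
The plan is to combine the survivability guarantee from Lemma~\ref{lemma: survivability} with a standard Markov-type truncation inequality for nonnegative random variables. Since we are given $u_0 \leq KV_0$, Lemma~\ref{lemma: survivability} ensures that $V(t) \geq 0$ for all $t \geq 0$ with probability one. This nonnegativity is the key structural fact that makes the lower bound possible, and it is the only place where the hypothesis $u_0 \leq KV_0$ enters.

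First I would establish the elementary inequality that for any nonnegative random variable $V(t)$ and any constant $c > 0$,
\[
\mathbb{E}[V(t)] \geq \mathbb{E}\!\left[ V(t)\,\mathbf{1}_{\{V(t) \geq c\}} \right] \geq c\,P(V(t) \geq c).
\]
The first inequality holds because we discard the nonnegative contribution on the event $\{V(t) < c\}$, and the second holds because $V(t) \geq c$ on the indicated event.

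Next I would translate this bound back into the language of the cumulative profit or loss. Recalling the identity $V(t) = V_0 + g(t)$, the event $\{V(t) \geq c\}$ coincides with $\{g(t) \geq c - V_0\}$. Hence
\[
P(V(t) \geq c) = P(g(t) \geq c - V_0) = 1 - P(g(t) < c - V_0) \geq 1 - F(c - V_0, t),
\]
where the final inequality uses $P(g(t) < c - V_0) \leq P(g(t) \leq c - V_0) = F(c - V_0, t)$. Combining this with the truncation bound gives $\mathbb{E}[V(t)] \geq c\,(1 - F(c - V_0, t))$, and subtracting the constant $V_0$ yields
\[
\mathbb{E}[g(t)] = \mathbb{E}[V(t)] - V_0 \geq c\,(1 - F(c - V_0, t)) - V_0,
\]
which is exactly the claimed bound.

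I do not anticipate a substantial obstacle, since the argument is essentially the tail lower bound for the mean of a nonnegative variable. The only point requiring care is the direction of the inequality when passing from $P(g(t) \geq c - V_0)$ to the CDF $F$: because $F$ is defined through the event $\{g(t) \leq \cdot\}$ rather than $\{g(t) < \cdot\}$, one must verify that the strict-versus-weak inequality works in our favor, which it does precisely because $P(g(t) < c - V_0) \leq F(c - V_0, t)$.
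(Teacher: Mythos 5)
Your proposal is correct and follows essentially the same route as the paper's proof: both use the hypothesis $u_0 \leq KV_0$ only to invoke Lemma~\ref{lemma: survivability} for $V(t) \geq 0$, then apply Markov's inequality to $V(t)$ (you derive it via truncation rather than citing it) and translate the tail event into the CDF of $g(t)$ via $V(t) = V_0 + g(t)$. The only cosmetic difference is that you work with the event $\{V(t) \geq c\}$ while the paper uses $\{V(t) > c\}$, which makes the strict-versus-weak inequality bookkeeping you flag unnecessary there; your handling of it is correct in any case.
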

	
	\begin{proof}We begin by recalling that $g(t)=V(t)-V_0$.  Since $u_0\leq KV_0$, Lemma~\ref{lemma: survivability} tells us that $V(t)\geq 0.$ Thus, applying the Markov inequality, we obtain, for $c>0$, 
		\begin{align*}
		P(V(t)>c) 
		&\leq 
		\frac{\mathbb{E}[g(t))]+V_0}{c}.
		\end{align*}
		This implies that
		\begin{align*}
		\mathbb{E}[g(t)]& \geq cP(V(t)>c)-V_0\\[.5ex]
		&=cP(g(t)>c-V_0)-V_0\\[.5ex]
		&=c(1-F(c-V_0,t))-V_0
		\end{align*}
		where the $F(c-V_0,t)=P(g(t)\leq c-V_0)$ is the CDF for $g(t)$ which obtained in previous sections. Hence, the proof is complete.
	\end{proof}
	
	
	\vspace{5mm}
	\section{Concluding Remarks and Future Work}
	\label{SEC:conclusions}
	In this paper,
	we provided a complete characterization for the CDF of the cumulative trading profit or loss function~$g(t)$ under a class of an affine feedback control with stop-loss order. 
	It should be noted that in this paper, we are silent about how to select the feedback gain $K$. Thus, an immediate research direction would be to consider the problem of finding an optimal feedback gain $K$ which maximizes some utility. For example, an optimal stopping problem which maximizes some expected reward objective would be an interesting direction to pursue; see also~\cite{zhang2001stock} for an initial work along this line.
	A second example would be to consider the celebrated  Kelly criterion, see~\cite{kelly2011new,thorp2011kelly}, which calls for an maximization of the expected logarithmic growth of one's account; i.e.,
	one may seek to find an optimal $K^*$ maximizing
	$
	\mathcal{G}(K):= \mathbb{E}\left[\log\frac{V(t)}{V_0}\right].
	$
	As a third example, one might consider other \textit{utility} function such as Markowitz's mean-variance criterion as performance metric; e.g., see \cite{markowitz1959}.  
	
	Another interesting future research direction would be to study the drawdown effect when the stop order is included in the control. Some initial researches along this line can be found in \cite{hsieh2017drawdown} and \cite{Hsieh_Barmish_2017_domination}. We should also note that in this paper, only one GBM stock is considered. Hence, as a natural extension, it would be to consider the case where multi-stock, or so-called portfolio rebalancing, is involved.
	

	\bibliographystyle{siamplain}
	
	\bibliography{Affine_Control_w_StopOrder_submitted}
\end{document}